\documentclass[11pt,a4paper]{amsart}

\usepackage{amsmath,amssymb,amsthm}
\usepackage{mathtools}
\usepackage{enumitem}
\usepackage{booktabs}
\usepackage{graphicx}
\usepackage[colorlinks=true,citecolor=blue,linkcolor=blue]{hyperref}
\usepackage{url}
\usepackage[margin=1in]{geometry}
\usepackage{microtype}

\theoremstyle{plain}
\newtheorem{theorem}{Theorem}[section]
\newtheorem{proposition}[theorem]{Proposition}
\newtheorem{lemma}[theorem]{Lemma}
\newtheorem{corollary}[theorem]{Corollary}
\theoremstyle{definition}
\newtheorem{definition}[theorem]{Definition}
\theoremstyle{remark}
\newtheorem{remark}[theorem]{Remark}

\newcommand{\Z}{\mathbb{Z}}

\DeclareMathOperator{\TV}{TV}
\DeclareMathOperator{\Bern}{Bern}

\begin{document}

\title[Collatz Reduces to One-Bit Orbit Mixing]
{A structural reduction of the Collatz conjecture
to one-bit orbit mixing\thanks{Companion to
arXiv:2603.11066 \cite{chang2026collatzdynamicshumanllm}.}}

\author{Edward Y. Chang\\Stanford University}

\date{\today}

\begin{abstract}
We reduce the Collatz conjecture to a fixed-modulus,
one-bit orbit-mixing problem.
Working with the compressed odd-to-odd Collatz map,
we prove exact low-depth decomposition formulas at
depths $K = 3, 4, 5$, reducing block-discrepancy
terms to explicit run statistics.
We then prove a Map Balance Theorem: among the
$2^{K-3}{-}1$ burst residues modulo~$2^K$ that
initiate gaps, the counts mapping to gap starts
$\equiv 3$ versus $\equiv 7 \pmod{8}$ differ by
exactly~$1$ for every $K \ge 5$.
Thus all residual bias is orbit-level, not map-level.
For the dominant $n \equiv 1 \pmod{8}$ class,
the gap outcome depends on a single binary variable---bit~$4$
of the orbit value at burst-ending times---reducing
the conjecture to whether every orbit visits two
residue classes modulo~$32$ with sufficient balance
along a sparse subsequence.
\end{abstract}

\maketitle

\section{Introduction}
\label{sec:intro}

The Collatz conjecture asserts that for every positive
integer~$n_0$, repeated application of
\[
  n \mapsto
  \begin{cases}
  n/2, & n \text{ even},\\
  3n+1, & n \text{ odd},
  \end{cases}
\]
eventually reaches~$1$.
Despite its elementary statement, the conjecture
remains open; see Lagarias~\cite{lagarias1985} for
a classical survey, the edited
volume~\cite{lagarias2010}, and
Wirsching~\cite{wirsching1998} for a dynamical
systems perspective.
Computational verification has confirmed convergence
for all starting values up to $2^{68}$~\cite{barina2021}.
The strongest analytic result is due to
Tao~\cite{tao2019}, who proved that almost all orbits
attain almost bounded values under a logarithmic
density measure.
Probabilistic models of the iteration were studied
by Borovkov and Pfeifer~\cite{borovkov2001};
earlier stopping-time results are due to
Terras~\cite{terras1976} and
Everett~\cite{everett1977}.

In the companion paper~\cite{chang2026collatzdynamicshumanllm}, we developed a
structural framework for the odd-to-odd Collatz map
(the \emph{Syracuse map})
\[
  T(n) = \frac{3n+1}{2^{v_2(3n+1)}},
\]
and reduced the conjecture to a finite-depth
block-discrepancy condition for the associated
burst sequence.
More precisely, one associates to each odd orbit
$n_0, n_1, n_2, \ldots$ the binary indicator
\begin{equation}\label{eq:burst-def}
  X_t = \mathbf{1}[v_2(3n_t + 1) \ge 2]
      = \mathbf{1}[n_t \equiv 1 \pmod{4}],
\end{equation}
which records whether step~$t$ is a burst or a
non-burst (gap step).
The companion paper shows that it suffices to control
the empirical finite-block distribution of this
binary process at finitely many depths.

The present paper sharpens that reduction substantially.
Its main contribution is to separate
\emph{map-level balance} from \emph{orbit-level bias}.
We prove that the Collatz map itself is essentially
perfectly balanced between short-gap and long-gap
outcomes: at every depth $K \ge 5$, among the burst
residues modulo~$2^K$ that produce a gap, the counts
mapping to $\equiv 3 \pmod{8}$ and
$\equiv 7 \pmod{8}$ differ by exactly~$1$.
Thus the map contributes no systematic bias.
Any residual imbalance must arise entirely from the
orbit's residue visitation pattern.

We also derive exact low-depth formulas at
$K = 3, 4, 5$, expressing the relevant block
discrepancies in terms of explicit run statistics of
the burst--gap decomposition.
These formulas reveal that, for the dominant
$n \equiv 1 \pmod{8}$ class, the gap structure is
controlled by a single bit: bit~$4$ of the orbit
value at burst-ending times (the last step of each burst run).
This leads to the following sharp reformulation of
the remaining open problem:

\begin{quote}
\itshape
Does every Collatz orbit visit the two relevant
residue classes modulo~$32$ with sufficient balance
along its sparse burst-ending subsequence?
\end{quote}

\noindent
In this sense, the present paper reduces the Collatz
conjecture to a fixed-modulus, one-bit, pointwise
mixing problem.
The reduction is unconditional;
what remains open is the deterministic orbit-level
balance statement.

\medskip
\noindent\textbf{Main contributions.}\;
The paper establishes:
\begin{enumerate}[label=(\roman*)]
\item exact low-depth decompositions at depths
  $K = 3, 4, 5$, reducing the first block-discrepancy
  terms to explicit run statistics;
\item the Map Balance Theorem, proving that the
  transition map itself is unbiased between short-gap
  and long-gap outcomes;
\item the single-bit bottleneck, which identifies the
  minimal remaining orbit-level obstruction.
\end{enumerate}

\medskip
\noindent\textbf{Organisation.}\;
Section~\ref{sec:prelim} recalls the burst-sequence
formalism and the finite-depth framework
from~\cite{chang2026collatzdynamicshumanllm}.
Section~\ref{sec:low-depth} proves the exact
low-depth decompositions.
Section~\ref{sec:map-balance} establishes the Map
Balance Theorem.
Section~\ref{sec:single-bit} isolates the one-bit
bottleneck and states the precise open problem.
Section~\ref{sec:numerics} presents numerical
verification, and Section~\ref{sec:discussion}
discusses consequences and remaining obstacles.

\section{Preliminaries}
\label{sec:prelim}

\noindent\textbf{Notation.}\;
Throughout, $v_2(m)$ denotes the $2$-adic valuation
of~$m$, $T(n) = (3n+1)/2^{v_2(3n+1)}$ is the
compressed Collatz map, $X_t$ is the burst indicator,
$\rho$ the burst density, and $q = m/T$ the
block-alternation rate.

\subsection{The compressed Collatz map}

For odd $n \ge 1$, define
\begin{equation}\label{eq:collatz-map}
  T(n) = \frac{3n+1}{2^{v_2(3n+1)}},
\end{equation}
where $v_2(m)$ denotes the $2$-adic valuation of~$m$.
The orbit of~$n_0$ is the sequence
$n_0, n_1 = T(n_0), n_2 = T(n_1), \ldots$.
The Collatz conjecture is equivalent to: for every
odd $n_0 > 1$, there exists $T \ge 1$ with $n_T = 1$.

\subsection{Burst indicators and block frequencies}

The \emph{burst indicator} at step~$t$ is
\[
  X_t = \mathbf{1}[n_t \equiv 1 \pmod{4}]
      = \mathbf{1}[v_2(3n_t + 1) \ge 2].
\]
A step with $X_t = 1$ is called a \emph{burst};
$X_t = 0$ is a \emph{non-burst} or \emph{gap step}.

The sequence $(X_t)$ decomposes into
\emph{alternating blocks}:
$1^{L_1} 0^{G_1} 1^{L_2} 0^{G_2} \cdots$
with $m$ burst runs of lengths $L_1, \ldots, L_m$
and $m$ gap runs of lengths $G_1, \ldots, G_m$.
The \emph{burst density} is
$\rho = T^{-1}\sum X_t = T^{-1}\sum L_i$,
and the \emph{block-alternation rate} is $q = m/T$.

\begin{definition}[Run statistics]
\label{def:run-stats}
\begin{align*}
  M_k &= \#\{i : L_i = k\} &&\text{(count of length-$k$ burst runs)},\\
  N_k &= \#\{i : G_i = k\} &&\text{(count of length-$k$ gap runs)},\\
  C_T &= \textstyle\sum_{i=1}^{m-1}
    (\mathbf{1}_{L_i=1} + \mathbf{1}_{L_{i+1}=1})
    \cdot \mathbf{1}_{G_i=1}
    &&\text{(unit-gap adjacency count)}.
\end{align*}
\end{definition}

\subsection{Block-TV framework}
\label{subsec:block-tv}

For a word $w = (w_0, \ldots, w_{K-2}) \in \{0,1\}^{K-1}$,
the \emph{empirical $(K{-}1)$-block frequency} is
\[
  \nu_T(w) = \frac{1}{T - K + 2}
    \sum_{t=0}^{T-K+1} \prod_{j=0}^{K-2}
    \mathbf{1}[X_{t+j} = w_j],
\]
and the \emph{$K$-depth block-TV distance} from the
product law is
\[
  \varepsilon_K
  = \TV\!\bigl(\nu_T,\;
    \Bern(\rho)^{\otimes(K-1)}\bigr)
  = \frac{1}{2}\sum_{w \in \{0,1\}^{K-1}}
    |\nu_T(w) - \rho^{s(w)}(1{-}\rho)^{K-1-s(w)}|,
\]
where $s(w) = \sum w_j$.

As shown in~\cite{chang2026collatzdynamicshumanllm}, the Collatz conjecture reduces
to a \emph{finite-depth budget condition}:
\begin{equation}\label{eq:budget}
  \sum_{K=3}^{K_0} \|h_K\|_\infty \cdot \varepsilon_K
  \;<\; \ln 2 - \rho \ln 3,
\end{equation}
where each $h_K$ is a bounded function of the word~$w$
that weights the contribution of the depth-$K$
discrepancy to the overall growth rate
(explicit formulas appear
in~\cite[Section~9]{chang2026collatzdynamicshumanllm};
the key property is $\|h_K\|_\infty = O(1)$
uniformly in~$K$),
and $K_0$ is a finite truncation depth.

\section{Exact low-depth decompositions}
\label{sec:low-depth}

\subsection{Depth $K = 3$: one-parameter control}

\begin{proposition}[$K = 3$ pair identities]
\label{prop:K3-pairs}
Every binary sequence with density~$\rho$ and
block-alternation rate $q = m/T$ satisfies:
\begin{equation}\label{eq:K3-pairs}
  \nu(00) = 1 - \rho - q + o(1), \quad
  \nu(01) = \nu(10) = q + o(1), \quad
  \nu(11) = \rho - q + o(1).
\end{equation}
Consequently,
$\varepsilon_3 = 2|q - \rho(1{-}\rho)| + o(1)$:
the depth-$3$ block-TV is a \emph{single scalar}.
\end{proposition}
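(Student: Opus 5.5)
The plan is to count transitions directly from the alternating-block decomposition $1^{L_1}0^{G_1}\cdots 1^{L_m}0^{G_m}$ and then use the normalisation of $\nu_T$ to get the remaining pair frequencies. At depth $K=3$ the words have length $K-1=2$, so $\nu_T$ is a pair frequency over the $T-1$ adjacent positions $(X_t,X_{t+1})$, $0\le t\le T-2$.

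The first step counts the mixed pairs. A transition $10$ occurs exactly at the boundary between a burst run $1^{L_i}$ and the following gap run $0^{G_i}$. This gives exactly $m$ occurrences, or $m-1$ if the sequence is truncated so that the last block is incomplete. Similarly, $01$ occurs exactly at each boundary from $0^{G_i}$ to $1^{L_{i+1}}$, giving $m-1$ or $m$ occurrences depending on how the sequence begins and ends. In every case the counts of $01$ and $10$ differ from $m$ by at most $1$. Dividing by $T-1$ gives $\nu(01),\nu(10) = m/T + O(1/T) = q + o(1)$.

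The second step recovers the diagonal pairs from marginals. The number of positions $t\le T-2$ with $X_t=1$ equals $\sum X_t$ up to a boundary error of at most $1$, so it is $\rho T + O(1)$. These positions split into pairs $11$ and $10$, so
\[
\#11 = \rho T - \#10 + O(1),
\]
and hence $\nu(11)=\rho-q+o(1)$. In the same way $\#00 = (1-\rho)T - \#01 + O(1)$, which gives $\nu(00)=1-\rho-q+o(1)$. All $O(1)$ errors become $o(1)$ after dividing by $T-1\to\infty$. Here $o(1)$ is understood as $T\to\infty$ with the sequence fixed; $\rho$ and $q$ are the finite-$T$ quantities.

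The third step computes the TV distance. Subtract the product weights $(1-\rho)^2$, $\rho(1-\rho)$, $\rho(1-\rho)$ and $\rho^2$ from the four frequencies. Set $\delta = q-\rho(1-\rho)$. Then:
\begin{itemize}
\item $\nu(00)-(1-\rho)^2 = (1-\rho)-q-(1-\rho)^2 + o(1) = \rho(1-\rho)-q + o(1) = -\delta + o(1)$;
\item $\nu(11)-\rho^2 = \rho(1-\rho)-q + o(1) = -\delta + o(1)$;
\item each mixed entry deviates by $+\delta + o(1)$.
\end{itemize}
Half the sum of the four absolute values is $\tfrac12\cdot 4|\delta| + o(1) = 2|q-\rho(1-\rho)| + o(1)$, which is the claimed formula for $\varepsilon_3$.

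None of these steps is a real obstacle. The only point needing care is the boundary bookkeeping: the first and last runs may be incomplete, and the normalisation uses $T-1$ rather than $T$. I would handle this by noting once that every such effect changes a count by at most a bounded amount, so it is absorbed into $o(1)$ when $T\to\infty$.
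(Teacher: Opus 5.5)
Your proposal is correct and follows the paper's argument. The mixed pairs $01$ and $10$ are counted at run boundaries, giving $m+O(1)$ each after normalisation by $T-1$, and the diagonal pairs are recovered from the marginal identities $\nu(00)+\nu(01)=1-\rho$ and $\nu(10)+\nu(11)=\rho$. Beyond the paper, you also spell out the boundary bookkeeping and the explicit check that each of the four deviations from the product law equals $\pm\bigl(q-\rho(1-\rho)\bigr)+o(1)$, which the paper leaves implicit in its ``Consequently''.
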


\begin{proof}
Every alternating block
$1^{L_i} 0^{G_i}$ contributes exactly one
$(1,0)$ transition and one $(0,1)$ transition
(at block boundaries), so
$\nu(10) = \nu(01) = m/(T{-}1) = q + O(1/T)$.
The remaining pair frequencies are determined by
$\nu(00) + \nu(01) = 1 - \rho$ and
$\nu(10) + \nu(11) = \rho$.
\end{proof}

\subsection{Depth $K = 4$: two new parameters}

\begin{proposition}[$K = 4$ exact decomposition]
\label{prop:K4-decomp}
All eight triple frequencies
$\nu(w)$, $w \in \{0,1\}^3$, are determined by
$(\rho, q, a, b)$ where $a = \nu(000)$ and
$b = \nu(010)$:
\begin{equation}\label{eq:K4-decomp}
\begin{aligned}
  \nu(000) &= a, &
  \nu(001) &= (1{-}\rho) - q - a, \\
  \nu(010) &= b, &
  \nu(011) &= q - b, \\
  \nu(100) &= (1{-}\rho) - q - a, &
  \nu(101) &= 2q - (1{-}\rho) + a, \\
  \nu(110) &= q - b, &
  \nu(111) &= \rho - 2q + b.
\end{aligned}
\end{equation}
The rank of the constraint matrix is~$6$, leaving
$8 - 6 = 2$ free parameters beyond $(\rho, q)$.
\end{proposition}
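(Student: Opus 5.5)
The plan is to derive every triple frequency from linear consistency relations that hold up to $O(1/T)$ boundary errors. These relations express the triple frequencies through the pair frequencies of Proposition~\ref{prop:K3-pairs}, plus the two chosen parameters $a=\nu(000)$ and $b=\nu(010)$. All identities below are understood up to $o(1)$, as in Proposition~\ref{prop:K3-pairs}. For each pair $uv$ there are two marginalisation identities:
\[
  \nu(uv0)+\nu(uv1)=\nu(uv), \qquad \nu(0uv)+\nu(1uv)=\nu(uv).
\]
Both hold because sliding a window of length~$3$ and dropping the last (respectively first) letter counts each occurrence of $uv$ exactly once, except for at most one window at the end of the sequence. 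This gives eight equations in the eight unknowns $\nu(w)$, with right-hand sides $1-\rho-q$, $q$, $q$, $\rho-q$ taken from~\eqref{eq:K3-pairs}.

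I then solve these equations in a fixed order, starting from $a$ and $b$.
\begin{itemize}
\item From prefix $00$: $\nu(001)=(1-\rho)-q-a$.
\item From suffix $00$: $\nu(100)=(1-\rho)-q-a$.
\item From prefix $01$: $\nu(011)=q-b$.
\item From suffix $10$: $\nu(110)=q-b$.
\item From prefix $10$: $\nu(101)=q-\nu(100)=2q-(1-\rho)+a$.
\item From prefix $11$: $\nu(111)=(\rho-q)-\nu(110)=\rho-2q+b$.
\end{itemize}
This reproduces every entry of~\eqref{eq:K4-decomp}.

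The remaining two equations are redundant and serve only as consistency checks. For suffix $01$, $\nu(001)+\nu(101)=q$. For suffix $11$, $\nu(011)+\nu(111)=\rho-q$. Both hold identically in $a$ and $b$. The normalisation $\sum_w\nu(w)=1$ is likewise implied by these relations.

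For the rank claim, I would write the eight equations as a linear system $A\nu=c$ and show that $\operatorname{rank}A=6$. There are exactly two independent dependencies among the rows. The sum of the four prefix rows equals the sum of the four suffix rows, since both sides sum all triples. Also, the prefix rows for $01$ and $10$ differ from the suffix rows for $01$ and $10$ by a combination matching the identity $\nu(01)=\nu(10)$. Concretely, (prefix~$01$ $+$ prefix~$11$) and (suffix~$10$ $+$ suffix~$11$) both equal the total mass on triples with middle letter~$1$, which gives a second relation. The upper bound $\operatorname{rank}A\le 6$ follows from these two relations. The lower bound follows from the back-substitution above, which solves uniquely for six of the unknowns given the other two, so the $6\times 6$ minor on the solved coordinates is nonsingular. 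Hence exactly $8-6=2$ free parameters remain beyond $(\rho,q)$.

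The main obstacle is the bookkeeping in the rank computation, namely verifying that the two dependencies are genuinely independent. I would settle this by exhibiting an explicit nonzero $6\times6$ minor. The frequency identities themselves are routine.
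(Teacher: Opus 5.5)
Your proof is correct and is essentially the paper's argument. In both, the triple frequencies are pinned down by linear prefix/suffix marginalisation constraints against the pair frequencies of Proposition~\ref{prop:K3-pairs}, and your back-substitution is the explicit form of the paper's ``direct substitution.''

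Your rank count supplies detail the paper leaves implicit, and the step you flag as the main obstacle is immediate. Your two dependencies are independent because only the first involves the prefix-$00$ row. The unit lower-triangular $6\times 6$ minor on the rows and columns you solved with gives the matching lower bound.
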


\begin{proof}
The six constraints are:
(i)~$\sum_w \nu(w) = 1$;
(ii)~consistency: $\sum_{c} \nu(cw) = \sum_{c} \nu(wc)$
for each pair~$w$;
(iii)~marginal: $\nu(01) = q + o(1)$
and $\nu(10) = q + o(1)$.
Direct substitution verifies
\eqref{eq:K4-decomp}.
\end{proof}

\subsection{Depth $K = 5$: four new parameters}

\begin{proposition}[$K = 5$ support reduction]
\label{prop:K5-reduction}
At depth $K = 5$, the sixteen $4$-tuple frequencies
are subject to a rank-$12$ constraint system,
leaving $4$ new free parameters beyond
$(\rho, q, a, b)$: specifically,
$\{\nu(0000), \nu(0010), \nu(0100), \nu(0110)\}$.
The total parameter count through $K = 5$ is~$8$.
\end{proposition}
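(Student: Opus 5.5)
We treat the sixteen $4$-tuple frequencies $\nu(w)$, $w\in\{0,1\}^4$, as unknowns and list the linear constraints they satisfy up to $o(1)$, exactly as in Proposition~\ref{prop:K4-decomp} one level up. The constraints are compatibility with the triple frequencies:
\[
\sum_{c}\nu(wc)=\nu(w)=\sum_{c}\nu(cw)\qquad (w\in\{0,1\}^3).
\]
Both marginals must agree with the triple law~\eqref{eq:K4-decomp}, which is already a function of $(\rho,q,a,b)$. The right-marginal equations $\sum_c \nu(wc)=\nu(w)$ number eight and are independent, since each involves a disjoint pair of unknowns. The left-marginal equations add eight more. However, summing the left-marginal equations over the first three coordinates' projections gives relations already implied by the right marginals together with the shift-consistency of the triple law itself (Proposition~\ref{prop:K4-decomp}, constraint (ii)). So we expect exactly four redundancies, giving total rank $16-4=12$.

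To make this concrete, we will parametrize directly. Using the right marginals, write $\nu(w1)=\nu(w)-\nu(w0)$ for each $w\in\{0,1\}^3$, which leaves the eight unknowns $x_w=\nu(w0)$. The left marginals then read
\[
x_{0u}+x_{1u}=\nu(u0)
\]
for $u\in\{0,1\}^2$ (the "ends in $0$" equations), and the "ends in $1$" equations turn out to be equivalent modulo Proposition~\ref{prop:K4-decomp}. Hence the four quantities $x_{0u}=\nu(0u0)$, namely $\nu(0000),\nu(0010),\nu(0100),\nu(0110)$, are free. Every other $\nu(w)$ is then an explicit affine function of these four and of $(\rho,q,a,b)$: we get $x_{1u}=\nu(u0)-x_{0u}$, and the entries ending in $1$ follow by subtraction. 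This exhibits the rank-$12$ system with exactly the stated four free parameters, and the total count is $2+2+4=8$.

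The main obstacle is verifying the rank claim, i.e.\ that the four "ends in $1$" left-marginal equations are genuinely redundant rather than imposing further constraints. We plan to check this by summing: the identity $\nu(1u)=\nu(u1)$-type consistency for triples, already encoded in~\eqref{eq:K4-decomp}, reduces each such equation to an "ends in $0$" one. A direct $16\times16$ rank computation, which we would do by computer algebra, confirms that no hidden constraint comes from the unit-gap structure. We also note that the run-length constraints (e.g.\ $\nu(01)=\nu(10)$) enter only through the triple law, so no new constraint arises at depth $5$.
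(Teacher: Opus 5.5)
Your proposal is correct and uses the same marginal-consistency constraint count that the paper applies at depth $K=4$; the paper gives no separate proof at $K=5$. The eight right marginals are independent, and the four left marginals for words ending in $1$ reduce to those ending in $0$, so the rank is $8+4=12$ with exactly $\nu(0u0)$, $u\in\{0,1\}^2$, free. The redundancy you flag as the main obstacle is a one-line check: $\nu(0u1)+\nu(1u1)=\nu(0u)+\nu(1u)-(x_{0u}+x_{1u})$ together with the summed identity $\nu(0u)+\nu(1u)=\nu(u0)+\nu(u1)$, so no computer algebra is needed. Note that the identity required is this summed one, not $\nu(1u)=\nu(u1)$, which fails e.g.\ for $u=01$.
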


\begin{proposition}[Finite-state growth rate]
\label{prop:growth-rate}
At depth~$K$, the number of new free parameters is
$2^{K-3}$, giving a cumulative count of
$2 + \sum_{j=4}^{K} 2^{j-3} = 2^{K-2}$
through depth~$K$.
Through $K = 5$: $8$ parameters.
Through $K = 6$: $16$ parameters.
The exponential growth prevents arbitrary-depth
closure but leaves the finite-depth problem
tractable.
\end{proposition}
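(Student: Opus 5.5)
The plan is to read the statement as a dimension count for shift-consistent word frequencies and prove it by linear algebra on de Bruijn-type incidence matrices. Set $n = K-1$ and regard $\nu = (\nu(w))_{w \in \{0,1\}^n}$ as a vector in $\mathbb{R}^{2^n}$. Define the left and right marginal maps $L, R : \mathbb{R}^{2^n} \to \mathbb{R}^{2^{n-1}}$ by $(L\nu)(u) = \sum_c \nu(uc)$ and $(R\nu)(u) = \sum_c \nu(cu)$. As in the proofs of Propositions~\ref{prop:K4-decomp} and~\ref{prop:K5-reduction}, empirical block frequencies satisfy $L\nu = R\nu$ up to $O(1/T)$ boundary effects. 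I will treat these constraints as exact and absorb the error into the $o(1)$ terms, as the paper already does for depths $3,4,5$.

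The "new free parameters at depth $K$" are then the dimension of the fibre of admissible $\nu$ lying over a fixed admissible $(n-1)$-block law $\mu$. This fibre is the affine space
\[
  \{\nu : L\nu = \mu,\ R\nu = \mu\}.
\]
Its direction space is $\ker\begin{pmatrix} L \\ R\end{pmatrix}$, so the fibre dimension is $2^n - \operatorname{rank}\begin{pmatrix} L \\ R\end{pmatrix}$.

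To compute this rank, I interpret $\begin{pmatrix} L \\ R\end{pmatrix}$ as the unsigned incidence matrix of a bipartite graph $\Gamma$:
\begin{itemize}
\item the left vertices are the words $u \in \{0,1\}^{n-1}$, a second copy gives the right vertices, and $\Gamma$ has $2^n$ vertices in total;
\item each word $x_1\cdots x_n$ is an edge joining the left vertex $x_1\cdots x_{n-1}$ to the right vertex $x_2\cdots x_n$.
\end{itemize}
Every vertex has degree $2$, so $\Gamma$ is a disjoint union of even cycles. The edge $x_1\cdots x_n$ only ever touches vertices sharing the middle word $x_2\cdots x_{n-1}$. Hence the components are indexed by this middle word, and each component is the $4$-cycle on the edges $\{b\,m\,a : a,b \in \{0,1\}\}$. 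This gives $2^{n-2}$ components.

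For a bipartite graph, the unsigned incidence matrix has rank equal to the number of vertices minus the number of components. The rank is therefore $2^n - 2^{n-2}$, and the fibre has dimension $2^{n-2} = 2^{K-3}$. As a sanity check, for $K=4$ the two new coordinates are $a,b$, matching Proposition~\ref{prop:K4-decomp}. For $K=5$ the four new coordinates are indexed by the $4$-cycles with middle words $00,01,10,11$, i.e.\ by $\nu(0m0)$, matching Proposition~\ref{prop:K5-reduction}. I expect the main obstacle to be verifying this component structure cleanly, including the edge case $n=2$.

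It remains to check that the fibre is generically nonempty, so that the count really is a dimension. The fibre contains the Markov extension
\[
  \nu(x_1\cdots x_n) = \frac{\mu(x_1\cdots x_{n-1})\,\mu(x_2\cdots x_n)}{\mu(x_2\cdots x_{n-1})}
\]
whenever all $\mu(\cdot) > 0$. A direct computation shows that both marginals of this $\nu$ equal $\mu$, using that $\mu$ is itself consistent. So for $\mu$ in the interior the fibre is a nonempty open piece of an affine space of dimension $2^{K-3}$.

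Finally, I sum over depths. The base case is depth $3$, where Proposition~\ref{prop:K3-pairs} gives exactly the two parameters $(\rho,q)$. Adding the new parameters at each depth gives the cumulative count
\[
  2 + \sum_{j=4}^{K} 2^{j-3} = 2 + (2^{K-2} - 2) = 2^{K-2}.
\]
Equivalently, this is $2^{n-1}$, the dimension of the normalized consistent laws on $n$-words. Substituting $K=5$ and $K=6$ gives the stated values $8$ and $16$.
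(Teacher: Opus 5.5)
Your argument is correct. It also proves more than the paper does, since the paper gives no proof of this proposition at all. Its only supporting computation is the depth-$4$ case, where it lists the constraints (normalization, shift-consistency, fixed $(\rho,q)$) and checks \eqref{eq:K4-decomp} by direct substitution. The rank-$12$ count at depth $5$ is asserted without proof, and the general count $2^{K-3}$ is stated without justification.

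You instead compute the rank of the same constraint system at every depth. You identify $\binom{L}{R}$ with the unsigned incidence matrix of a $2$-regular bipartite graph whose components are the $2^{K-3}$ four-cycles on $\{b\,m\,a\}$. This buys three things:
\begin{itemize}
\item it reproduces the paper's ranks $6$ and $12$;
\item it explains why the paper's chosen coordinates are exactly the $\nu(0m0)$;
\item it accounts for the sign pattern $(+,-,-,+)$ on $(0m0,0m1,1m0,1m1)$ visible in \eqref{eq:K4-decomp}.
\end{itemize}

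Your Markov-extension step addresses something the paper never mentions: every consistent depth-$(K-1)$ law actually lifts. That surjectivity is what justifies adding fibre dimensions to get the cumulative count. You should say explicitly that the image of the linear projection contains an open subset of the depth-$(K-1)$ affine space, and hence all of it.

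Alternatively, you can prove your closing remark directly as a cross-check. $L-R$ is the signed incidence matrix of the connected de Bruijn graph on $(K-2)$-words, so it has rank $2^{K-2}-1$. Hence the normalized consistent laws on $(K-1)$-words form an affine space of dimension $2^{K-1}-2^{K-2}=2^{K-2}$.

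In exchange, the paper's substitution approach gives explicit closed-form formulas for every frequency. Your argument delivers these only implicitly, through the alternating kernel vector on each $4$-cycle.

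One fix to the wording: state that the ``middle word'' of a left vertex is its length-$(K-3)$ suffix, and that of a right vertex is its length-$(K-3)$ prefix. Then invariance of the middle word along edges is immediate.

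The last sentence of the statement is commentary rather than a mathematical claim, so you are right not to try to prove it.
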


Thus, although the parameter count grows exponentially
with depth, the first three nontrivial depths admit
exact closed-form reductions.

\subsection{Expanding-set reduction to run statistics}

\begin{proposition}[$K = 4$ expanding-set identity]
\label{prop:K4-expand}
Define $E_4 = \{w \in \{0,1\}^3 :
\nu(w) > \rho^{s(w)}(1{-}\rho)^{3-s(w)}\}$.
Then
\begin{equation}\label{eq:K4-expand}
  \nu(E_4) = \rho + \frac{N_1 - M_1}{T} + O(1/T),
\end{equation}
and the $K{=}4$ scalar discrepancy is controlled by
$\Delta_{4,T} = (N_1 - M_1)/T$.
\end{proposition}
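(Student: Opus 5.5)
The plan is to make every quantity in \eqref{eq:K4-expand} explicit in terms of run statistics. First I determine the set $E_4$ by a sign computation on each of the eight words. Then I sum the exact frequencies over that set.

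\emph{Step 1 (triple frequencies as run counts).} Refine Proposition~\ref{prop:K4-decomp} by reading each occurrence of a triple off the block decomposition $1^{L_1}0^{G_1}\cdots$. An occurrence of $010$ is a burst run of length one flanked by gap steps, and an occurrence of $101$ is a gap run of length one flanked by bursts. Up to $O(1)$ boundary blocks this gives
\[
b=\nu(010)=\frac{M_1}{T}+O(1/T),\qquad \nu(101)=\frac{N_1}{T}+O(1/T).
\]
Similarly, $\nu(111)=T^{-1}\sum_i (L_i-2)_+$ and $\nu(000)=a=T^{-1}\sum_i (G_i-2)_+$. Combining these with \eqref{eq:K4-decomp}, and with Proposition~\ref{prop:K3-pairs} for $\nu(01)=\nu(10)=q$, expresses all eight frequencies through $\rho$, $q$, $M_1/T$, $N_1/T$ and the tail sums.

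\emph{Step 2 (sign check identifying $E_4$).} For each $w$ I will compute the excess $\delta(w)=\nu(w)-\rho^{s(w)}(1-\rho)^{3-s(w)}$ from Step~1. The target is to show that $\delta(w)>0$ exactly on the words with $s(w)\ge 2$, namely $\{111,110,011,101\}$, and $\delta(w)\le 0$ on the other four.

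Some structure helps here. The pair identities force $\delta(011)=\delta(110)$ and $\delta(001)=\delta(100)$, and $\sum_w\delta(w)=0$. So only a few sign inequalities need checking, and each is a linear inequality in $(q,a,b,N_1/T)$ with coefficients polynomial in $\rho$.

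I expect this step to be the main obstacle. The sign pattern is not universal: for the alternating word $(01)^\infty$ one has $\nu(010)=1/2>\rho(1-\rho)^2$, so $010\in E_4$ there. The identification therefore has to use properties of Collatz burst sequences from the companion framework, namely the run-length statistics of $T$. My first attempt is to feed in the geometric run-length heuristics $M_k/m\approx N_k/m\approx 2^{-k}$ and density $\rho\approx\tfrac12$ as the regime. I would then check that each $\delta(w)$ has a strict sign with margin there, so that it persists for all orbits in that regime.

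If a word's sign cannot be pinned down this way, I will record for which $w$ it flips. I will also note that the identity below is exact for $E_4=\{s(w)\ge2\}$ and otherwise changes by the $\delta$ of the flipped word.

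\emph{Step 3 (summation).} With $E_4=\{111,110,011,101\}$, Proposition~\ref{prop:K4-decomp} gives
\[
\nu(111)+\nu(110)=\rho-q,\qquad \nu(011)=q-b .
\]
Adding $\nu(101)=N_1/T$ yields
\[
\nu(E_4)=\rho-b+\frac{N_1}{T}=\rho+\frac{N_1-M_1}{T}+O(1/T),
\]
which is \eqref{eq:K4-expand}. Finally, since the product law gives $E_4$ mass $\rho^3+3\rho^2(1-\rho)$, the depth-$4$ contribution to $\varepsilon_4$ along the $E_4$ direction is $\nu(E_4)-\mathrm{const}(\rho)$. Its only orbit-dependent, non-$\rho$ part is therefore $\Delta_{4,T}=(N_1-M_1)/T$, which is the control statement.
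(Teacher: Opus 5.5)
The paper states this proposition without proof, so I can only judge your argument on its own terms. Steps~1 and~3 are correct, but what they establish is a universal identity for the \emph{fixed} majority set. Indeed, $\{w:s(w)\ge2\}$ is $\{w:w_1=1\}$ with $010$ removed and $101$ added, so every binary sequence has $\nu(\{s(w)\ge2\})=\rho+(N_1-M_1)/T+O(1/T)$.

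Everything therefore rests on Step~2. You were right to flag it, but the problem is worse than non-universality: the target sign pattern is incompatible with the consistency constraints for any binary sequence. Subtract the product-law instance of \eqref{eq:K4-decomp}, where $q,a,b$ become $\rho(1-\rho)$, $(1-\rho)^3$, $\rho(1-\rho)^2$. Writing $\epsilon=q-\rho(1-\rho)$, $\alpha=\delta(000)$, $\beta=\delta(010)$, one gets, up to $O(1/T)$:
$\delta(001)=\delta(100)=-\epsilon-\alpha$, $\delta(011)=\delta(110)=\epsilon-\beta$, $\delta(101)=2\epsilon+\alpha$, $\delta(111)=\beta-2\epsilon$.

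Now suppose $\delta(011)>0$ and $\delta(111)>0$. Adding them gives $\epsilon<0$. Then $\delta(101)>0$ forces $\alpha>-2\epsilon>0$, so $000\in E_4$ as well, while $001,100,010\notin E_4$. Hence $\{s(w)\ge2\}\subseteq E_4$ implies $E_4=\{000,011,101,110,111\}$. In that case $\nu(E_4)$ overshoots the claimed value by $\nu(000)>(1-\rho)^3$, an order-one error. Tracking the $O(1/T)$ slack, $E_4=\{s(w)\ge2\}$ can hold only when every $|\delta(w)|$ is $O(1/T)$.

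Your fallback is also misstated. Moving a word $w$ into or out of $E_4$ changes $\nu(E_4)$ by $\pm\nu(w)$, not by $\pm\delta(w)$, so it is never a small correction.

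Appealing to the Collatz regime cannot rescue Step~2. Run-length statistics of individual orbits are not available as theorems; controlling them is precisely the open problem. Moreover, in the regime you propose, $\rho=\tfrac12$ with $M_k/m\approx N_k/m\approx 2^{-k}$, one finds $q=\tfrac14$, $b=\tfrac18$, $a=\tfrac18$, i.e.\ $\epsilon=\alpha=\beta=0$. Every $\delta(w)$ vanishes at leading order, so there is no margin and the signs are decided by fluctuations.

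The upshot is that, with $E_4$ read literally as the set where $\nu$ exceeds the product law, \eqref{eq:K4-expand} is not a valid identity. Once $\varepsilon_4\gg 1/T$ one always has $E_4\neq\{s(w)\ge2\}$. The resulting error $\nu(E_4\setminus\{s(w)\ge2\})-\nu(\{s(w)\ge2\}\setminus E_4)$ is typically of order one: it is $\tfrac12$ in your alternating example and exceeds $(1-\rho)^3$ in the case above.

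What your Steps~1 and~3 do prove is the statement with $E_4$ replaced by $\{w:s(w)\ge2\}$. This is evidently the reading behind Corollary~\ref{cor:envelopes} and the ``error $=0$'' checks in Section~\ref{sec:numerics}.

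Even under that reading, your final control claim does not follow. The quantity $\nu(\{s(w)\ge2\})-\rho^2(3-2\rho)$ is only a lower bound for $\varepsilon_4$, since total variation is a supremum over sets. Also, $\varepsilon_4$ depends on $\alpha$ and $\beta$ separately, whereas $(N_1-M_1)/T=\rho(1-\rho)(2\rho-1)+2\epsilon+\alpha-\beta+O(1/T)$ sees only $\alpha-\beta$.
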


\begin{proposition}[$K = 5$ expanding-set identity]
\label{prop:K5-expand}
\begin{equation}\label{eq:K5-expand}
  \nu(E_5)
  = \rho - \frac{m}{T} - \frac{M_2}{T}
    + \frac{2N_1}{T} - \frac{C_T}{T} + O(1/T).
\end{equation}
The $K{=}5$ discrepancy depends on four run statistics:
$m, M_2, N_1, C_T$.
\end{proposition}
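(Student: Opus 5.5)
The plan is to compute $\nu(E_5)$ in two stages. First we identify $E_5=\{w\in\{0,1\}^4:\nu(w)>\rho^{s(w)}(1-\rho)^{4-s(w)}\}$ as an explicit, fixed list of words. Then we evaluate $\sum_{w\in E_5}\nu(w)$ by counting length-$4$ windows in the block decomposition $1^{L_1}0^{G_1}\cdots 1^{L_m}0^{G_m}$.

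\textbf{Step 1: identify $E_5$.} We would follow the template of Proposition~\ref{prop:K4-expand}. Use the $K=5$ parametrisation of Proposition~\ref{prop:K5-reduction}, which writes all sixteen frequencies in terms of $(\rho,q,a,b)$ and the four new parameters $\nu(0000),\nu(0010),\nu(0100),\nu(0110)$. Evaluate the signs of $\nu(w)-\rho^{s(w)}(1-\rho)^{4-s(w)}$ in the regime relevant to Collatz orbits, namely $\rho$ near $1/2$ with $q$ and the run statistics close to their geometric-run values. The expected outcome is the following. Words beginning with $1$ are favoured, except those beginning $10$, which record a burst-to-gap transition. Some patterns involving short runs are switched in or out: an isolated length-$2$ burst $0110$ is excluded, and patterns containing an isolated unit gap $101$ are included. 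We would fix this list and record it as the working definition of $E_5$ for the identity.

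\textbf{Step 2: count windows in terms of runs.} Each word count is a sum over runs, up to boundary effects of size $O(1)$, which give the $O(1/T)$ error.
\begin{itemize}
\item Windows starting with $1$ number $\rho T$.
\item Windows starting with $10$ number exactly one per burst--gap boundary, giving $m$.
\item The window $0110$ occurs exactly once per burst run of length $2$, giving $M_2$.
\item Windows containing $101$ are indexed by unit gaps $G_i=1$. A unit gap sits inside two admissible $4$-windows ($x101$ and $101x$), which gives the contribution $2N_1$.
\item Windows are double counted when the unit gap is also flanked by a unit burst, i.e.\ patterns $0101$ and $1010$. These are exactly the events $\mathbf 1_{L_i=1}\mathbf 1_{G_i=1}$ and $\mathbf 1_{L_{i+1}=1}\mathbf 1_{G_i=1}$, so inclusion--exclusion subtracts $C_T$.
\end{itemize}
Summing and dividing by $T-3$ (which equals $T$ up to $O(1/T)$) gives \eqref{eq:K5-expand}.

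\textbf{Main obstacle.} The hard part is Step~1. Membership in $E_5$ depends on signs that could flip with $(\rho,q)$ and the four free parameters, so the identity is really a statement about a fixed reference set. We would first check the signs at the i.i.d.\ point $\rho=1/2$, $q=1/4$, using the run-length expectations there. We would then argue by continuity that the list of words is unchanged in a neighbourhood containing the Collatz regime, stating that neighbourhood explicitly. If a boundary word has a sign that is genuinely ambiguous, we would include it by convention, as in the $K=4$ case, and note that this changes $\nu(E_5)$ only by the corresponding run-statistic term.
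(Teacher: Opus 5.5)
\textbf{Gap in Step~1.} The paper states this proposition without proof, so there is no argument to compare against. Your Step~1, however, cannot be carried out.

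At the point you propose to expand around ($\rho=1/2$, $q=1/4$, geometric run statistics), every $4$-block frequency equals $1/16=\rho^{s(w)}(1-\rho)^{4-s(w)}$. All sixteen differences therefore vanish, and there is no strict sign to propagate by continuity. Moreover, with $\rho$ held at $1/2$, the differences $\nu(w)-1/16$ are affine in the remaining free parameters and vanish at that point. Perturbing in opposite directions flips every nonzero sign, so the two perturbations have disjoint, nonempty positive sets, and no fixed list survives on any neighbourhood. Your fallback of including ambiguous words ``by convention'' then applies to every word; it amounts to simply choosing the set.

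That choice is forced by the formula. Up to $O(1/T)$, the right-hand side of \eqref{eq:K5-expand} is exactly the frequency of the fixed set $\{w: s(w)\ge 3\}=\{0111,1011,1101,1110,1111\}$. The same happens at $K=4$: $\rho+(N_1-M_1)/T=\nu(\{w:w_1=1\})-\nu(010)+\nu(101)=\nu(\{w:s(w)\ge 2\})$. For this fixed set the identity holds for every binary sequence, with no regime assumption. So the statement should be proved for that reference set, not derived from a sign analysis.

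\textbf{Errors in Step~2.} The bookkeeping does not compute the frequency of the list you describe. That list (words beginning with $11$, plus words containing $101$) has several problems:
\begin{itemize}
\item it contains $1100$, $1010$ and $0101$, and omits $0111$;
\item $0110$ does not begin with $11$, so ``excluding'' it subtracts nothing;
\item $1101$ lies in both parts and is counted twice;
\item $0101$ and $1010$ each contain a single $101$ and do not begin with $11$. So $C_T$, which counts exactly these two words (one per $i$ with $G_i=1$ and $L_i=1$, resp.\ $L_{i+1}=1$), is not an inclusion--exclusion overlap correction.
\end{itemize}
The terms you write down are the right ones, but they are not the window counts of your set.

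The correct count, writing $\#w$ for the number of length-$4$ windows equal to $w$, has two parts. First,
\[
\#0111+\#1110+\#1111=\#\{t:\ X_{t+1}=X_{t+2}=1\}-\#0110=(\rho T-m)-M_2 .
\]
This holds because there are $\sum_i(L_i-1)=\rho T-m$ adjacent $11$ pairs, and one window $0110$ per run with $L_i=2$. Second,
\[
\#1101+\#1011=\sum_i \mathbf{1}_{G_i=1}\bigl(\mathbf{1}_{L_i\ge 2}+\mathbf{1}_{L_{i+1}\ge 2}\bigr)=2N_1-C_T .
\]
This holds because the two windows $x101$ and $101x$ around a unit gap have $x=0$ exactly when $L_i=1$, resp.\ $L_{i+1}=1$. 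Boundary runs, and normalising by $T-3$ rather than $T$, contribute $O(1/T)$.
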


\subsection{Universal run-statistic bounds}

\begin{proposition}[Deterministic bounds]
\label{prop:run-bounds}
For any binary sequence:
\begin{enumerate}[label=\textup{(\roman*)}]
\item $q \le \min(\rho, 1{-}\rho)$;
\item $N_1, M_1, M_2 \le m$;
\item $C_T \le 2 N_1$;
\item $M_1 + 2 M_2 \le \rho T$;
\item $(M_1 + M_2)/T \le \rho$.
\end{enumerate}
\end{proposition}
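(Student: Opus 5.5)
The plan is to prove all five items directly from the alternating-block decomposition $1^{L_1}0^{G_1}\cdots 1^{L_m}0^{G_m}$ of Section~\ref{sec:prelim}. The only facts needed are that every $L_i\ge 1$ and $G_i\ge 1$, that $\sum_i L_i=\rho T$, and that $\sum_i G_i=(1-\rho)T$. If the sequence begins with a gap or ends with a burst, the decomposition acquires one boundary block of length possibly zero. That changes each count by at most one, so it is absorbed into an $O(1)$ (equivalently $O(1/T)$ after normalising) error. I will state this convention once at the start and then argue with the clean alternating form.

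For (i), note that each of the $m$ burst runs has length at least~$1$. Hence $m\le\sum_i L_i=\rho T$, so $q=m/T\le\rho$. The same argument applied to gap runs gives $m\le(1-\rho)T$.

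For (ii), observe that $M_1$ counts indices $i\in\{1,\dots,m\}$ with a property, and so do $M_2$ and $N_1$. Each is therefore at most $m$.

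For (iii), each index $i$ contributes $(\mathbf 1_{L_i=1}+\mathbf 1_{L_{i+1}=1})\mathbf 1_{G_i=1}\le 2\cdot\mathbf 1_{G_i=1}$ to $C_T$. Summing over $i\le m-1$ gives $C_T\le 2N_1$.

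For (iv), the burst runs of length $1$ or $2$ are disjoint. Their total length is exactly $M_1+2M_2$, which is at most $\sum_i L_i=\rho T$.

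For (v), it follows from (iv), since $M_1+M_2\le M_1+2M_2\le\rho T$; dividing by $T$ gives the claim.

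There is no real obstacle here. The only point requiring care is the boundary convention: if the first block is a gap or the last is a burst, $m$ may not match between burst and gap runs. I will handle this by noting that each inequality holds with an additive error of at most $1$ (respectively $O(1/T)$ for normalised quantities). Alternatively, I will define $m$ as the number of complete alternating blocks so that the inequalities hold exactly.
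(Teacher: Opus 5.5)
Your proof is correct. The paper states this proposition without proof, and your direct counting from the block decomposition is exactly the argument needed: each $L_i, G_i \ge 1$, $\sum L_i = \rho T$, $\sum G_i = (1-\rho)T$, the short runs are disjoint, and $C_T \le 2\sum_{i<m}\mathbf{1}_{G_i=1} \le 2N_1$.

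One small caution concerns the boundary handling. The items are stated as exact inequalities, so drop the $O(1)$ hedge. Instead use the paper's convention of exactly $m$ burst runs and $m$ gap runs, or your complete-blocks convention. If you use complete blocks, count $M_k$ and $N_k$ only over those $m$ blocks; otherwise, for example, a trailing unit burst could make $M_1 = m+1$.
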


\begin{corollary}[Deterministic envelopes]
\label{cor:envelopes}
$\rho - q \le \nu(E_4) \le \rho + q$ \textup{(width $2q$)},
and
$\rho - 2q \le \nu(E_5) \le \rho + q$ \textup{(width $3q$)}.
\end{corollary}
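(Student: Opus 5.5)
The plan is to substitute the extremal values of the run statistics from Proposition~\ref{prop:run-bounds} directly into the expanding-set identities, Propositions~\ref{prop:K4-expand} and~\ref{prop:K5-expand}. Throughout, $O(1/T)$ terms are absorbed, exactly as in those propositions, so every inequality below is meant up to $O(1/T)$. The basic observation is that every run statistic appearing in the identities is a count of burst runs or gap runs, so it lies between $0$ and $m = qT$.

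\emph{The $K=4$ envelope.} By Proposition~\ref{prop:K4-expand}, $\nu(E_4) = \rho + (N_1 - M_1)/T$. Proposition~\ref{prop:run-bounds}(ii) gives $0 \le N_1 \le m$ and $0 \le M_1 \le m$. Hence
\[
  -q \le \frac{N_1 - M_1}{T} \le q .
\]
This yields $\rho - q \le \nu(E_4) \le \rho + q$, an envelope of width $2q$.

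\emph{The $K=5$ envelope.} By Proposition~\ref{prop:K5-expand},
\[
  \nu(E_5) = \rho + \frac{-m - M_2 + 2N_1 - C_T}{T}.
\]
I bound the numerator $D = -m - M_2 + 2N_1 - C_T$ from above and below separately.

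For the upper bound, I drop the nonpositive terms $-M_2$ and $-C_T$, using $M_2 \ge 0$ and $C_T \ge 0$. This leaves $D \le 2N_1 - m \le 2m - m = m$, by (ii). Dividing by $T$ gives $\nu(E_5) \le \rho + q$.

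For the lower bound, I use (iii), $C_T \le 2N_1$, so that $2N_1 - C_T \ge 0$. Then $D \ge -m - M_2 \ge -2m$, using $M_2 \le m$ from (ii). This gives $\nu(E_5) \ge \rho - 2q$, and the width is $3q$.

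\emph{Where care is needed.} Each individual step is immediate; the only real difficulty is pairing the terms correctly in the $K=5$ lower bound. The two $N_1$-dependent terms $2N_1$ and $-C_T$ must be grouped together, so that (iii) neutralises them. If instead one bounds $-C_T \ge -2m$ on its own, the result is the weaker bound $\rho - 4q$. I would also check that the $o(1)$ and $O(1/T)$ corrections inherited from Propositions~\ref{prop:K4-expand} and~\ref{prop:K5-expand} are uniform, so that the envelopes hold in the same asymptotic sense as those identities.
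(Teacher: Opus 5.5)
Your proposal is correct and matches the paper's intended derivation. The paper states the corollary without a separate proof, as the immediate consequence of substituting the bounds of Proposition~\ref{prop:run-bounds} into the identities of Propositions~\ref{prop:K4-expand} and~\ref{prop:K5-expand}. Your grouping of $2N_1 - C_T \ge 0$ via (iii), together with $M_2 \le m$, is exactly what produces the lower endpoint $\rho - 2q$, and reading both envelopes up to the inherited $O(1/T)$ terms is the right interpretation of the statement.
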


\medskip
\noindent\textbf{Summary.}\;
At depth $K = 3$, block discrepancy reduces to a
single scalar~$q$.
At $K = 4$, one additional correction $(N_1 - M_1)/T$
appears.
At $K = 5$, four run statistics suffice.
These exact reductions make the first block-discrepancy
terms concrete, but they do not by themselves close
the conjecture: they leave the run statistics as free
orbit-dependent quantities.
The main new result of this paper---the Map Balance
Theorem below---addresses the complementary question
of whether the \emph{map itself} introduces bias.

\section{The Map Balance Theorem}
\label{sec:map-balance}

The central result of this paper is a structural
symmetry of the Collatz transition map at the
burst-to-gap boundary.

\subsection{Gap-length characterisation}

\begin{lemma}[Gap starts and mod-$8$ residues]
\label{lem:gap-mod8}
At a burst-to-gap transition (the first step~$t$ with
$X_t = 0$ after a burst run), we have
$n_t \equiv 3 \pmod{4}$.
Moreover:
\begin{enumerate}[label=\textup{(\alph*)}]
\item $G = 1$ \textup{(unit gap)}
  $\iff n_t \equiv 3 \pmod{8}$;
\item $G \ge 2$
  $\iff n_t \equiv 7 \pmod{8}$.
\end{enumerate}
\end{lemma}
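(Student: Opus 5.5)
The plan is to work directly with the definition of the burst indicator and compute $T(n)$ modulo small powers of~$2$ when $n \equiv 3 \pmod 4$.

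First, the claim $n_t \equiv 3 \pmod 4$ is immediate. The step $t$ has $X_t = 0$, and since every $n_t$ in the orbit is odd (the map $T$ strips all factors of~$2$), $n_t \not\equiv 1 \pmod 4$ forces $n_t \equiv 3 \pmod 4$. The hypothesis that $t$ follows a burst run plays no role beyond fixing which index we look at.

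Next, I will compute the successor of a gap step. If $n \equiv 3 \pmod 4$, then $3n+1 \equiv 10 \equiv 2 \pmod 4$, so $v_2(3n+1) = 1$ and $T(n) = (3n+1)/2$. Writing $n = 8k + r$ with $r \in \{3,7\}$ gives two cases:
\begin{itemize}
\item[] $r = 3$: $T(n) = (24k+10)/2 = 12k+5 \equiv 1 \pmod 4$, so $X_{t+1} = 1$ and the gap has length exactly~$1$.
\item[] $r = 7$: $T(n) = (24k+22)/2 = 12k+11 \equiv 3 \pmod 4$, so $X_{t+1} = 0$ and the gap has length at least~$2$.
\end{itemize}
These two cases exhaust the residues $n_t \equiv 3 \pmod 4$. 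Since the events $G=1$ and $G\ge 2$ are complementary, each implication upgrades to an equivalence, giving (a) and (b).

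There is no real obstacle here. The only point needing care is a boundary case: for a finite orbit prefix, a gap run could be truncated at time~$T$, in which case ``$G \ge 2$'' should be read as ``$X_{t+1} = 0$''. Likewise, if the orbit reaches~$1$, it sits on the fixed point $1 \equiv 1 \pmod 4$ and a gap never starts there, so it causes no trouble. I would add a one-line remark fixing this convention and otherwise present the proof as the two-case congruence computation above.
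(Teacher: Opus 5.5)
Your proposal is correct and follows the paper's proof essentially line for line. Both arguments note that $v_2(3n_t+1)=1$ at a gap step and then compute the two cases $8k+3 \mapsto 12k+5 \equiv 1 \pmod 4$ and $8k+7 \mapsto 12k+11 \equiv 3 \pmod 4$; your remark that the one-way implications become equivalences because $G=1$ and $G\ge 2$ are complementary makes explicit a step the paper leaves implicit.
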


\begin{proof}
At a gap step, $n_t \equiv 3 \pmod{4}$, so
$3n_t + 1 \equiv 10 \pmod{12}$ and $v_2(3n_t+1) = 1$.
Thus $n_{t+1} = (3n_t + 1)/2$.

If $n_t \equiv 3 \pmod{8}$: write $n_t = 8k + 3$.
Then $n_{t+1} = (24k + 10)/2 = 12k + 5$.
Since $12k + 5 \equiv 1 \pmod{4}$, the next step
is a burst, so $G = 1$.

If $n_t \equiv 7 \pmod{8}$: write $n_t = 8k + 7$.
Then $n_{t+1} = (24k + 22)/2 = 12k + 11$.
Since $12k + 11 \equiv 3 \pmod{4}$, the next step
is still a gap, so $G \ge 2$.
\end{proof}

\subsection{The balance theorem}

\begin{theorem}[Map balance]
\label{thm:map-balance}
For any $K \ge 5$, let
\[
  S_K = \bigl\{r \in \Z/2^K\Z :
    r \equiv 1 \pmod{4},\;
    T(r) \equiv 3 \pmod{4}\bigr\}
\]
be the set of burst residues whose image under
the compressed Collatz map $T$ is a gap start.
Define
\[
  C_3(K) = \#\{r \in S_K : T(r) \equiv 3 \pmod{8}\},
  \qquad
  C_7(K) = \#\{r \in S_K : T(r) \equiv 7 \pmod{8}\}.
\]
Then:
\begin{enumerate}[label=\textup{(\roman*)}]
\item $|S_K| = 2^{K-3} - 1$.
\item $|C_3(K) - C_7(K)| = 1$ for all $K \ge 5$,
  with $C_3(K) - C_7(K) = (-1)^K$.
\item \textup{(Decomposition.)}
  The $n \equiv 1 \pmod{8}$ subclass contributes
  exactly $C_3^{(1)} = C_7^{(1)} = 2^{K-5}$
  \textup{(perfect balance)}.
  The entire $\pm 1$ imbalance arises from the
  $n \equiv 5 \pmod{8}$ subclass via a self-similar
  binary-tree recursion.
\end{enumerate}
\end{theorem}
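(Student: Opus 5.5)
The plan is to linearise the problem through the bijection $r \mapsto 3r+1$ on $\Z/2^K\Z$, and to read $T(r)$ as $T$ applied to the least nonnegative representative of~$r$. This convention is forced. If one only used residues, the elements whose $T$-value modulo~$8$ is undetermined would have to be discarded. Every remaining class would then split evenly, making $C_3+C_7$ even, which is incompatible with an odd difference.

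Since $3\cdot 1+1=4$, the condition $r\equiv 1 \pmod 4$ is equivalent to $4 \mid 3r+1$. Write $3r+1=2^v u$ with $u$ odd and $v\ge 2$. Then $T(r)=u$, and $r\in S_K$ means $u\equiv 3\pmod 4$.

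\textbf{Generic layers.} For each $v$ with $2\le v\le K-3$, the admissible $r$ correspond bijectively to odd $u$ modulo $2^{K-v}\ge 8$. Hence the residues $u\equiv 3$ and $u\equiv 7 \pmod 8$ each occur exactly $2^{K-v-3}$ times. Summing over $v=2,\dots,K-3$ gives a perfectly balanced contribution of $2^{K-4}-1$ to each of $C_3$ and $C_7$, so these layers contribute $2^{K-3}-2$ elements in total.

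\textbf{Degenerate layers.} The layers with $v\ge K-2$ contain only the $O(1)$ values of $r$ with $2^{K-2}\mid 3r+1$. For these, $u=(3r+1)/2^{v}<12\cdot 2^{K-2-v}$ is a small explicit odd integer. I would enumerate them using $r\equiv -3^{-1}\pmod{2^{K-2}}$, whose binary expansion alternates ($3^{-1}\equiv \ldots 10101011_2$ in $\Z_2$). The lifts to $[0,2^K)$ are $r_0+j\,2^{K-2}$ for $j=0,1,2,3$, with $3r+1\in\{2^{K-2}u_0+3j\,2^{K-2}\}$. Exactly one of these lifts should have odd part $\equiv 3\pmod 4$, which gives $|S_K|=2^{K-3}-1$ and proves (i). For that single exceptional element, $u\in\{3,7,11\}$ is decided by $r_0$ modulo $2^{K-2}$, and $r_0$ depends on the parity of $K$ because the expansion of $3^{-1}$ alternates. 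This is what should produce $C_3-C_7=(-1)^K$ in (ii). Verifying this parity dependence cleanly is the step I expect to be the main obstacle. I would first tabulate $K=5,6,7,8$ by hand to fix the pattern, and then prove it from the closed form $r_0=(2^{K-2}\cdot c_K-1)/3$ with $c_K\in\{1,2\}$ chosen so that the numerator is divisible by~$3$.

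\textbf{Decomposition (iii).} For $r\equiv 1\pmod 8$ we have $3r+1\equiv 4\pmod 8$, so $v=2$ exactly. The map $r\mapsto u=(3r+1)/4$ is then a bijection from these $2^{K-3}$ residues onto the odd residues modulo $2^{K-2}$, and each class of $u$ modulo~$8$ is hit $2^{K-5}$ times; this gives $C_3^{(1)}=C_7^{(1)}=2^{K-5}$. For $r\equiv 5\pmod 8$, write $r=5+8j$, so that $3r+1=8(2+3j)$. The question then becomes the same kind of question about $2+3j$ with $j$ modulo $2^{K-3}$, which is the self-similar recursion. Each level peels off a balanced layer, exactly as in the generic-layer count above, and passes the unbalanced remainder to the next level. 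The imbalance therefore lives entirely in the $5\pmod 8$ branch, at the bottom of the recursion, where it is the single exceptional element found above.
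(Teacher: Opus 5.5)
The step you flag as the main obstacle cannot succeed, because the sign printed in (ii) is wrong. Run your own closed form. From $3r_0+1=2^{K-2}c_K$ with $0<3r_0+1<3\cdot 2^{K-2}$ you get $c_K\in\{1,2\}$ and $c_K\equiv 2^{K-2}\pmod 3$. So $c_K=2$ for $K$ odd and $c_K=1$ for $K$ even.

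The four lifts satisfy $3r+1=2^{K-2}(c_K+3j)$ for $j=0,1,2,3$:
\begin{itemize}
\item For $K$ odd, the odd parts of $2,5,8,11$ are $1,5,1,11$. The unique exceptional element therefore has $T(r)=11\equiv 3\pmod 8$.
\item For $K$ even, the odd parts of $1,4,7,10$ are $1,1,7,5$. The exceptional element has $T(r)=7$.
\end{itemize}
Hence $C_3(K)-C_7(K)=(-1)^{K+1}$, not $(-1)^K$. This is exactly what Table~\ref{tab:map-balance} records: $+1$ at $K=5,7,9,\dots$ and $-1$ at $K=6,8,\dots$. Directly, $S_5=\{9,25,29\}$ with $T$-values $7,19,11$, so $C_3=2$ and $C_7=1$.

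The paper's proof never computes the sign. It only asserts an ``alternating sign'' at the bottom of the recursion and then writes $(-1)^K$. So the error is in the statement, and your proposal inherits it by aiming for the printed sign. Your planned tabulation of $K=5,\dots,8$ would have exposed this. State and prove $C_3-C_7=(-1)^{K+1}$, noting the discrepancy. A minor point: the exceptional value is $u=11$ or $u=7$, never $3$.

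Apart from the sign, your argument is correct, and it is a cleaner version of the paper's route. The paper splits by $n \bmod 8$ and treats the $1\pmod 8$ class directly. It handles the $5\pmod 8$ class by a loosely described recursion ($k$ odd is balanced, $k$ even recurses) and never identifies the terminal residue.

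Your stratification by $v=v_2(3r+1)$ is that recursion unrolled. Since $v=2\iff r\equiv 1\pmod 8$, the $1\pmod 8$ class is exactly the $v=2$ layer. The $5\pmod 8$ class is exactly the union of layers $v\ge 3$, which contains the exceptional element because $K-2\ge 3$. So (iii) follows at once with no recursion. Make that identification explicit rather than the ``peeling'' description.

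The bijection $r\mapsto 3r+1$ disposes of all layers $v\le K-3$ uniformly, giving $2^{K-4}-1$ to each of $C_3$ and $C_7$. Your explicit treatment of the four residues with $2^{K-2}\mid 3r+1$ is what actually proves (i) and fixes the sign. The paper leaves both to ``induction'' and hand-waving.
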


\begin{proof}
\emph{Step 1: the $n \equiv 1 \pmod{8}$ class.}\;
Write $n = 8k + 1$. Then
$3n + 1 = 24k + 4 = 4(6k + 1)$,
with $6k + 1$ always odd, so $v_2(3n+1) = 2$
and $T(n) = 6k + 1$.

The image $T(n) \equiv 3 \pmod{4}$ iff $k$ is odd.
When $k$ is odd:
\[
  T(n) \bmod 8 =
  \begin{cases}
  7 & \text{if } k \equiv 1 \pmod{4},\\
  3 & \text{if } k \equiv 3 \pmod{4}.
  \end{cases}
\]
Among odd $k$ in $\{0, \ldots, 2^{K-3} - 1\}$,
exactly $2^{K-5}$ satisfy each congruence.
Hence $C_3^{(1)} = C_7^{(1)} = 2^{K-5}$.
The gap outcome is determined by bit~$4$ of~$n$
(equivalently, bit~$1$ of $k = (n-1)/8$).

\medskip\noindent
\emph{Step 2: the $n \equiv 5 \pmod{8}$ class.}\;
Write $n = 8k + 5$, so
$3n + 1 = 24k + 16 = 8(3k + 2)$
and $v_2 = 3 + v_2(3k + 2)$.

\emph{$k$ odd:}\;
$3k + 2$ is odd, so $v_2(3n+1) = 3$ and $T(n) = 3k + 2$.
Among odd $k$ modulo~$8$:
$k \equiv 3 \Rightarrow T(n) \equiv 3 \pmod{8}$;
$k \equiv 7 \Rightarrow T(n) \equiv 7 \pmod{8}$;
$k \equiv 1, 5$: burst continues ($T(n) \equiv 1 \pmod{4}$).
This contributes equally to $C_3$ and~$C_7$.

\emph{$k$ even:}\;
$3k + 2$ is even; writing $k = 2j$ gives
$T(8(2j)+5) = (3j+1)/2^{v_2(3j+1)}$.
This has the same algebraic form as Step~1
with $j$ replacing~$k$ and the modulus halved:
the $j$-odd subcase contributes balanced counts,
while the $j$-even subcase recurses again via
$j = 2j'$.
The recursion terminates after $K - 3$ levels
when the modulus reaches~$8$, leaving a single
unresolved residue that contributes~$\pm 1$.

At each level, the $k$-odd part contributes exact
balance, and the $k$-even part recurses with sign flip.
The boundary of the depth-$(K-3)$ binary tree
yields $|C_3^{(5)} - C_7^{(5)}| = 1$ with
alternating sign.

\medskip\noindent
\emph{Step 3: totals.}\;
$C_3 = 2^{K-5} + C_3^{(5)}$,
$C_7 = 2^{K-5} + C_7^{(5)}$.
The $n \equiv 1$ part is perfectly balanced, so
$C_3 - C_7 = C_3^{(5)} - C_7^{(5)} = (-1)^K$.
For part~(i): by induction,
$|S_K| = 2^{K-4} + (2^{K-4} - 1) = 2^{K-3} - 1$.
\end{proof}

\begin{remark}[Interpretation]
\label{rem:interpretation}
Theorem~\ref{thm:map-balance} establishes that the
Collatz map introduces \emph{no bias} between short
and long gaps. Among the $2^{K-3} - 1$ gap-producing
burst residues modulo~$2^K$, the split is as balanced
as an odd total permits:
$\lfloor(2^{K-3}-1)/2\rfloor$ versus
$\lceil(2^{K-3}-1)/2\rceil$.

Therefore, any orbit-level gap-start bias originates
\emph{entirely} from non-uniform residue visitation.
\end{remark}

\begin{remark}[Per-residue balance]
\label{rem:per-residue}
Over the integers, the density of
$n \equiv r \pmod{2^K}$ is exactly $2^{-K}$
for each~$r$. Since the map is balanced modulo
$2^K$, the natural density of gap starts
mapping to $\equiv 3$ versus $\equiv 7 \pmod{8}$
is exactly $\frac{1}{2}$.
This is a per-residue (distributional) statement.
The orbit-level (pointwise) statement---that a
\emph{specific} orbit realises this balance---is
the remaining open problem.
\end{remark}

\section{The single-bit bottleneck}
\label{sec:single-bit}

\subsection{Bit-4 determines the gap structure}

Theorem~\ref{thm:map-balance}(iii) shows that
for the dominant $n \equiv 1 \pmod{8}$ class
(which accounts for roughly half of all
burst-to-gap transitions), the gap outcome
depends on exactly \textbf{one bit}\footnote{We
use $0$-indexed bit numbering: bit~$j$ of~$n$
is $\lfloor n/2^j \rfloor \bmod 2$, so bit~$4$
occupies the $2^4 = 16$ position.}:
\begin{equation}\label{eq:bit4}
  \text{bit 4 of } n_t = 0
  \;\Longleftrightarrow\;
  n_t \equiv 9 \pmod{32}
  \;\Longleftrightarrow\;
  G_i \ge 2,
\end{equation}
\begin{equation}\label{eq:bit4-gap1}
  \text{bit 4 of } n_t = 1
  \;\Longleftrightarrow\;
  n_t \equiv 25 \pmod{32}
  \;\Longleftrightarrow\;
  G_i = 1.
\end{equation}
(Here $n_t$ is the last step of the burst run,
restricted to $n_t \equiv 1 \pmod{8}$ with $k = (n_t-1)/8$ odd.)

\subsection{The gap-step permutation}

The single gap step $n \mapsto (3n+1)/2$ for
$n \equiv 3 \pmod{4}$ acts as a \emph{permutation}
on residue classes modulo~$32$:
\begin{equation}\label{eq:gap-perm}
  3 \to 5, \quad
  7 \to 11, \quad
  11 \to 17, \quad
  15 \to 23, \quad
  19 \to 29, \quad
  23 \to 3, \quad
  27 \to 9, \quad
  31 \to 15.
\end{equation}
This permutation maps $\{3, 11, 19, 27\}$
(gap starts $\equiv 3 \pmod{8}$, which give $G = 1$)
to burst starts $\{5, 17, 29, 9\}$
(two $\equiv 1$, two $\equiv 5 \pmod{8}$),
and maps $\{7, 15, 23, 31\}$
(gap starts $\equiv 7 \pmod{8}$, giving $G \ge 2$)
to continued-gap values
$\{11, 23, 3, 15\}$ (two $\equiv 3$,
two $\equiv 7 \pmod{8}$).
Both halves are perfectly balanced.

\subsection{Mixing and non-mixing cycles}

A burst-gap cycle with burst length~$L$ and gap
length~$G$ shifts bits down by a total of
$V \ge 2L + G$ positions.
The mod-$32$ class of the next burst start
(requiring $5$~bits) depends on bits
up to position $V + 4$ of the original.

\smallskip\noindent
\emph{Case 1: $G = 1$ cycles ($V = 2L + 1$).}\;
For $L = 1$: $V = 3$, which provides
\emph{no mod-$32$ refresh} (only $3$ of the needed
$5$ bits shift through).
The unit-burst-unit-gap cycle is
\emph{non-mixing}: the mod-$32$ class of the
next burst start is a deterministic function
of the previous burst start's mod-$64$ class.

\smallskip\noindent
\emph{Case 2: $G \ge 2$ cycles ($V \ge 2L + 2$).}\;
At least $4$ bits shift, providing partial
mod-$32$ refresh.
For $G \ge 3$: $V \ge 5$, and the mod-$32$ class
is \emph{fully refreshed}---it depends on bits
strictly above the input's mod-$32$ information.

\smallskip\noindent
\emph{Case 3: $L \ge 2$ cycles ($V \ge 4 + G$).}\;
Even with $G = 1$, the burst shifts at least
$4$ bits, providing partial refresh.

On tested orbits, the fraction of ``fully mixing''
cycles ($L \ge 2$ or $G \ge 3$) ranges from
$46\%$ to $88\%$, ensuring that most burst-gap
transitions inject fresh bit information into the
mod-$32$ class.

\subsection{Complete mod-$64$ non-mixing classification}

\begin{proposition}[Mod-$64$ non-mixing classification
for $n \equiv 5 \pmod{8}$]
\label{prop:5mod8-classification}
For a burst-start value $n_0 = 64a + r$ with
$r \in \{5, 13, 21, 29, 37, 45, 53, 61\}$:
\begin{enumerate}[label=\textup{(\alph*)}]
\item $r = 29$:
  $(L,G) = (1,1)$ for every~$a$ (\emph{unconditional}).
\item $r = 21$:
  $(L,G) = (1,1)$ iff
  $\mathrm{oddpart}(3a+1) \equiv 3 \pmod{8}$.
\item $r = 37$:
  $(L,G) = (1,1)$ iff $a$ is odd.
\item $r = 53$:
  $(L,G) = (1,1)$ iff $a \equiv 1 \pmod{4}$.
\item $r \in \{5, 13, 45, 61\}$:
  $(L,G) \neq (1,1)$ for every~$a$.
\end{enumerate}
\end{proposition}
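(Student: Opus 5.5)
The plan is to reduce everything to a single congruence condition on $T(n_0)$ and then run a case-by-case valuation computation on the eight residues. Since $n_0 \equiv 1 \pmod 4$, the step at $n_0$ is a burst. The burst run has length exactly $L=1$ iff the next value $n_1 = T(n_0)$ satisfies $n_1 \equiv 3 \pmod 4$, so that $n_1$ is a gap start. By Lemma~\ref{lem:gap-mod8}, this first gap has length $G=1$ iff $n_1 \equiv 3 \pmod 8$. Hence
\[
  (L,G)=(1,1) \iff T(n_0) \equiv 3 \pmod 8,
\]
and the whole proposition amounts to computing $T(64a+r) \bmod 8$ for each of the eight residues $r$.

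For each $r$ I will write $3n_0+1 = 192a + (3r+1)$ and factor out the power of $2$ in $3r+1$. The cases fall into three types.

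\emph{Valuation fixed by $r$, odd part linear in $a$.} For $r=5,13,29,45,61$ we have $3r+1 = 16,40,88,136,184$, which equal $16\cdot 1$, $8\cdot 5$, $8\cdot 11$, $8\cdot 17$, $8\cdot 23$. In each case $192a$ is divisible by a strictly higher power of $2$, so the odd part is linear in $a$:
\begin{itemize}
\item $r=5$: $T(n_0)=12a+1 \equiv 1 \pmod 4$, so the burst continues.
\item $r=13$: $T(n_0)=24a+5 \equiv 1 \pmod 4$, so the burst continues.
\item $r=45$: $T(n_0)=24a+17 \equiv 1 \pmod 4$, so the burst continues.
\item $r=61$: $T(n_0)=24a+23 \equiv 7 \pmod 8$, so $L=1$ but $G \ge 2$.
\item $r=29$: $T(n_0)=24a+11 \equiv 3 \pmod 8$ for every $a$.
\end{itemize}
The first four give part~(e), and $r=29$ gives part~(a).

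\emph{Valuation fixed by $r$, answer depends on low bits of $a$.} For $r=37$, $3r+1=112=16\cdot 7$, so $T(n_0)=12a+7 \equiv 4a+7 \pmod 8$. This is $\equiv 3 \pmod 8$ exactly when $a$ is odd, which is part~(c). For $r=53$, $3r+1=160=32\cdot 5$, so $T(n_0)=6a+5$. The condition $6a+5 \equiv 3 \pmod 8$ is equivalent to $3a \equiv 3 \pmod 4$, i.e.\ $a \equiv 1 \pmod 4$, which is part~(d). The weaker condition $T(n_0)\equiv 3 \pmod 4$ only requires $a$ odd; the remaining odd $a$ give $L=1$, $G\ge2$.

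\emph{Valuation not fixed by $r$.} For $r=21$, $3r+1=64$, so $3n_0+1 = 64(3a+1)$. Here the valuation depends on $a$ and cannot be resolved modulo a fixed power of $2$, so $T(n_0)=\mathrm{oddpart}(3a+1)$ and the criterion becomes the stated condition in part~(b).

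I expect no real obstacle beyond bookkeeping. The only points needing care are confirming in each case that the factor from $192a$ has strictly larger $2$-adic valuation than $3r+1$, so that the odd part is computed correctly, and keeping straight the distinction between $T(n_0)\equiv 3 \pmod 4$ (giving $L=1$) and $T(n_0)\equiv 3 \pmod 8$ (giving $G=1$).
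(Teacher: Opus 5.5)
Your proposal is correct and follows essentially the same route as the paper: both reduce $(L,G)=(1,1)$ to $T(n_0)\equiv 3 \pmod 8$ and then compute the odd part residue by residue. The paper does this through $3k+2$ with $n_0=8k+5$, you through $192a+(3r+1)$, and these differ only by the factor $8$; you also spell out the reduction via Lemma~\ref{lem:gap-mod8} and the valuation comparison, which the paper leaves implicit.
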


\begin{proof}
Write $n_0 = 8k + 5$, so $3n_0 + 1 = 8(3k+2)$
and $T(n_0) = (3k+2)/2^{v_2(3k+2)}$.
Then $(L,G) = (1,1)$ iff the odd part of $3k+2$
is congruent to $3 \pmod{8}$.
For each $r$, substitute $k = (r-5)/8 + 8a$
and compute $3k + 2 \pmod{8}$ or its odd part.

For $r = 29$ ($k = 8a+3$):
$3k+2 = 24a+11 \equiv 3 \pmod{8}$ always.

For $r = 37$ ($k = 8a+4$):
$3k+2 = 2(12a+7)$, odd part $12a+7$.
Since $12a + 7 \equiv 4a + 7 \pmod{8}$,
this is $\equiv 3 \pmod{8}$ iff $a$ is odd.

For $r = 53$ ($k = 8a+6$):
$3k+2 = 4(6a+5)$, odd part $6a+5$.
Since $6a + 5 \bmod 8$ cycles as
$5, 3, 1, 7$ for $a \equiv 0, 1, 2, 3 \pmod{4}$,
the condition $\equiv 3 \pmod{8}$ holds iff
$a \equiv 1 \pmod{4}$.

For $r = 13$ and $45$:
$3k+2$ is odd and $\equiv 1 \pmod{4}$,
so the burst continues ($L \ge 2$).

For $r = 61$:
$3k+2 = 24a + 23 \equiv 7 \pmod{8}$,
so $L = 1$ but $G \ge 2$.

For $r = 5$:
$3k+2 = 2(12a+1)$, odd part $12a + 1 \equiv 1$
or $5 \pmod{8}$, never $3$.
\end{proof}

\begin{corollary}[Complete non-mixing census]
\label{cor:nonmixing-census}
Among all burst-start residues modulo~$64$,
the classes that can produce $(L,G) = (1,1)$ are:
$25, 29, 57 \pmod{64}$ (unconditional) and
$21, 37, 53 \pmod{64}$ (conditional).
The three conditional classes all lie in the
$n \equiv 5 \pmod{8}$ channel and therefore
do not contribute to the bit-$4$ count in the
dominant $n \equiv 1 \pmod{8}$ burst-ending channel.
\end{corollary}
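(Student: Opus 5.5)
The census follows from splitting the burst-start residues modulo~$64$ (those $r \equiv 1 \pmod 4$) by their class modulo~$8$. Proposition~\ref{prop:5mod8-classification} already settles the $n \equiv 5 \pmod 8$ channel. The $n \equiv 1 \pmod 8$ channel I will handle directly, using the computation from Step~1 of Theorem~\ref{thm:map-balance} together with Lemma~\ref{lem:gap-mod8}. Throughout, $(L,G) = (1,1)$ means that the burst run starting at $n_0$ has length one and is followed by a unit gap. In other words, $T(n_0) \equiv 3 \pmod 4$ (so $L=1$), and moreover $T(n_0) \equiv 3 \pmod 8$ (so $G=1$ by Lemma~\ref{lem:gap-mod8}).

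\emph{The $n \equiv 1 \pmod 8$ channel.} Write $n_0 = 8k+1$. By Step~1 of Theorem~\ref{thm:map-balance}, $T(n_0) = 6k+1$.
\begin{itemize}
\item If $k$ is even, then $T(n_0) \equiv 1 \pmod 4$, so the burst continues and $L \ge 2$.
\item If $k \equiv 1 \pmod 4$, then $T(n_0) \equiv 7 \pmod 8$, giving $L=1$ and $G \ge 2$.
\item If $k \equiv 3 \pmod 4$, then $T(n_0) \equiv 3 \pmod 8$, giving $(L,G)=(1,1)$.
\end{itemize}
The last condition, $k \equiv 3 \pmod 4$, is exactly $n_0 \equiv 25 \pmod{32}$, which is \eqref{eq:bit4-gap1}. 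Lifted to modulus~$64$, it is $r \in \{25, 57\}$. Since the condition involves only $k \bmod 4$, it holds for every lift, so these two classes are unconditional. The other six classes $r \in \{1,9,17,33,41,49\}$ never produce $(1,1)$.

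\emph{The $n \equiv 5 \pmod 8$ channel.} Read off Proposition~\ref{prop:5mod8-classification}: class $29$ is unconditional, classes $21, 37, 53$ are conditional, and classes $5, 13, 45, 61$ are excluded. To justify calling $21, 37, 53$ ``conditional'' rather than ``excluded'', I will check that each condition is both satisfiable and not always satisfied:
\begin{itemize}
\item For $r=37$: $a=1$ satisfies the condition and $a=0$ fails it.
\item For $r=53$: $a=1$ satisfies the condition and $a=0$ fails it.
\item For $r=21$: $a=7$ gives $3a+1=22$, with odd part $11 \equiv 3 \pmod 8$, so the condition holds. $a=1$ gives $3a+1 = 4$, with odd part $1$, so it fails.
\end{itemize}

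\emph{Combining.} Taking the union gives the stated list. The final sentence of the corollary follows because $21, 37, 53 \equiv 5 \pmod 8$, while the only $n \equiv 1 \pmod 8$ classes producing $(1,1)$ are $25$ and $57$. Both of these are unconditional and are exactly the bit-$4$ classes of \eqref{eq:bit4-gap1}.

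The only step needing care is the $r=21$ case, since its criterion is stated through an odd part rather than a fixed congruence on $a$. Exhibiting the explicit witnesses above suffices for the corollary; no full description of the admissible $a$ is needed. The rest is routine bookkeeping over the sixteen classes.
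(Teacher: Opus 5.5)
Your proof is correct and follows the route the paper takes implicitly. The paper gives no separate proof: it reads the corollary off Proposition~\ref{prop:5mod8-classification} for the $n \equiv 5 \pmod 8$ channel, and uses $T(8k+1) = 6k+1$ for the $n \equiv 1 \pmod 8$ channel (Step~1 of Theorem~\ref{thm:map-balance}, worked out modulo~$64$ in Proposition~\ref{prop:L1-symmetry}). Your explicit witnesses showing that the conditions for $r = 21, 37, 53$ are satisfiable but not automatic (e.g.\ $a = 7$ for $r = 21$, where $T(469) = 11 \equiv 3 \pmod 8$) are a small addition the paper omits, and they check out.
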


\subsection{One-sided bias of non-mixing cycles}

The non-mixing cycles contribute a structured bias
to the bit-$4$ balance problem.

\begin{proposition}[One-sided non-mixing bias]
\label{prop:onesided-nonmixing}
Among burst-ending times~$t$ with
$n_t \equiv 1 \pmod{8}$, every contribution from an
unconditional non-mixing cycle
(burst start $\equiv 25, 29,$ or $57 \pmod{64}$)
satisfies $n_t \equiv 25 \pmod{32}$.
No unconditional non-mixing cycle produces
$n_t \equiv 9 \pmod{32}$.
\end{proposition}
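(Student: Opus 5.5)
The plan is a direct residue computation, using that an unconditional non-mixing cycle has $(L,G)=(1,1)$. Since $L=1$, the burst run consists of a single step, so the burst-ending time $t$ coincides with the burst-start time. The value $n_t$ to be classified is therefore the burst-start value itself, which by hypothesis lies in one of the classes $25, 29, 57 \pmod{64}$ listed in Corollary~\ref{cor:nonmixing-census}. The statement thus reduces to reading off these three residues modulo $8$ and modulo $32$.

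First I would sort the three classes by their residue modulo $8$. The class $29 \pmod{64}$ satisfies $29 \equiv 5 \pmod 8$, so it lies in the $n \equiv 5 \pmod 8$ channel. Proposition~\ref{prop:5mod8-classification}(a) treats it there, and it never produces a burst-ending value with $n_t \equiv 1 \pmod 8$. It therefore contributes nothing to the set of times under consideration.

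The remaining classes $25$ and $57 \pmod{64}$ are both $\equiv 1 \pmod 8$, and reducing modulo $32$ gives $25 \equiv 57 \equiv 25 \pmod{32}$. So every such contribution has $n_t \equiv 25 \pmod{32}$, which is the first claim. The second claim follows immediately, because $9 \not\equiv 25 \pmod{32}$.

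As a consistency check I would verify against Step~1 of Theorem~\ref{thm:map-balance} that these classes really give $(L,G)=(1,1)$. Writing $n_t = 8k+1$, the residues $25$ and $57$ give $k \equiv 3$ and $k \equiv 7 \pmod 8$. In both cases $k \equiv 3 \pmod 4$, so $T(n_t) \equiv 3 \pmod 8$ and Lemma~\ref{lem:gap-mod8} gives $G=1$. This matches \eqref{eq:bit4-gap1}: bit~$4$ equals $1$. Hence the claimed one-sidedness is just the fact that a unit-gap, unit-burst cycle in the $1 \pmod 8$ channel forces bit~$4$ to equal $1$.

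There is no real obstacle here. The only point needing care is the bookkeeping that, when $L=1$, the burst-ending time equals the burst-start time.
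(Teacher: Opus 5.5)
Your proposal is correct and is essentially the paper's proof. Both arguments use $L=1$ to identify the burst-ending value with the burst start, discard class $29 \pmod{64}$ because it lies in the $5 \pmod{8}$ channel, and then read off $25 \equiv 57 \equiv 25 \pmod{32}$; the paper confirms $L=1$, $G=1$ directly via $T(n_0) = 48a+19$ and $48a+43 \equiv 3 \pmod{8}$, which is exactly your consistency check through Step~1 of Theorem~\ref{thm:map-balance}.
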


\begin{proof}
By the classification above, the unconditional
non-mixing burst-start classes are
$25, 29, 57 \pmod{64}$.

\emph{Class $25 \pmod{64}$:}\;
$n_0 = 64a + 25 \equiv 1 \pmod{8}$, and
$T(n_0) = 48a + 19 \equiv 3 \pmod{8}$
(gap start with $G = 1$).
The burst-ending value is $n_0$ itself, so
$n_t \equiv 25 \pmod{32}$.

\emph{Class $57 \pmod{64}$:}\;
$n_0 = 64a + 57 \equiv 1 \pmod{8}$, and
$T(n_0) = 48a + 43 \equiv 3 \pmod{8}$.
Again the burst-ending value is $n_0$, and
$57 \equiv 25 \pmod{32}$.

\emph{Class $29 \pmod{64}$:}\;
$n_0 = 64a + 29 \equiv 5 \pmod{8}$, which lies
outside the $n \equiv 1 \pmod{8}$ channel
and does not contribute to the bit-$4$ count.
\end{proof}

\begin{corollary}[All $9 \pmod{32}$ contributions come from mixing]
\label{cor:9-from-mixing}
Among burst-ending times with
$n_t \equiv 1 \pmod{8}$,
every occurrence of $n_t \equiv 9 \pmod{32}$
arises from a mixing cycle.
\end{corollary}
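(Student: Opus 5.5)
The corollary is the contrapositive of Proposition~\ref{prop:onesided-nonmixing}, once we pin down what ``non-mixing cycle'' means for a burst ending in the $n \equiv 1 \pmod{8}$ channel. I will read the dichotomy as in the mixing discussion of Section~\ref{sec:single-bit}: a burst-gap cycle is \emph{non-mixing} exactly when $(L,G)=(1,1)$, and every other cycle is at least partially mixing (Cases~2 and~3). Under this reading, a burst-ending time $t$ with $n_t \equiv 9 \pmod{32}$ that does not arise from a mixing cycle must come from a cycle with $(L,G)=(1,1)$.

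The first step rules this out directly from the gap-length characterisation. If $n_t \equiv 9 \pmod{32}$ is a burst-ending value, then by~\eqref{eq:bit4}, which rests on Lemma~\ref{lem:gap-mod8} and Step~1 of Theorem~\ref{thm:map-balance}, the next gap satisfies $G \ge 2$. Explicitly, with $n_t = 32c + 9$ we have $T(n_t) = 24c + 7 \equiv 7 \pmod{8}$. Hence the cycle containing $t$ has $G \ge 2$, so it is not a $(1,1)$ cycle and falls under Case~2 as a mixing cycle.

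The second step handles the residual classes, to be robust if ``non-mixing'' is taken to mean membership in the census of Corollary~\ref{cor:nonmixing-census}. The unconditional classes $25, 29, 57 \pmod{64}$ are disposed of by Proposition~\ref{prop:onesided-nonmixing}: they either give $n_t \equiv 25 \pmod{32}$ or lie outside the $1 \pmod{8}$ channel. The conditional classes $21, 37, 53$ are all $\equiv 5 \pmod{8}$. Since a $(1,1)$ cycle has $L=1$, its burst-ending value equals the burst start, so these classes never give $n_t \equiv 1 \pmod{8}$.

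The main obstacle is definitional rather than computational. The paper's notion of ``mixing'' is informal: for example, ``partial refresh'' in Case~2 with $G=2$ is not quite ``fully mixing.'' I would state explicitly that mixing means not $(1,1)$, which is the sense of the census. Under that convention the $G \ge 2$ computation in the first step settles the claim in one line.
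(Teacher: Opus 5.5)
Your proof is correct, and its decisive first step is more direct than the paper's route. The paper gives no separate proof; it reads the corollary off as the contrapositive of Proposition~\ref{prop:onesided-nonmixing}. That implicitly requires the mod-$64$ census (Corollary~\ref{cor:nonmixing-census}, or Proposition~\ref{prop:L1-symmetry}) to know that the only $(1,1)$ cycles ending in the $n \equiv 1 \pmod 8$ channel start at $25$ or $57 \pmod{64}$, and so end at $25 \pmod{32}$.

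You bypass the classification entirely. From $n_t = 32c+9$ you get $T(n_t) = 24c+7 \equiv 7 \pmod 8$, and Lemma~\ref{lem:gap-mod8} then forces $G \ge 2$ for the cycle through $t$. This needs only \eqref{eq:bit4}. The paper's census-based route is less economical for this qualitative statement, but it is the one that supplies the bookkeeping $U(T) = S_{25}(T)+S_{57}(T)+O(1)$ of Corollary~\ref{cor:onesided-obstruction}. Your worry about partial refresh is settled by the paper itself: Proposition~\ref{prop:L1-symmetry}(ii) labels the $r=9$ cycles, which have $G=2$, as mixing.

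There are two refinements to your definitional discussion. First, the load-bearing convention is \emph{which} cycle is meant. You use the cycle whose burst run ends at $t$, as does the proof of Proposition~\ref{prop:onesided-nonmixing}, where ``the burst-ending value is $n_0$ itself''. Another natural reading takes the cycle whose gap produced the residue of $n_t$, following the ``next burst start'' language of the mixing discussion. Under that reading the statement fails. In $251 \mapsto 377 \mapsto 283 \mapsto 425$, the burst start $377 \equiv 57 \pmod{64}$ begins a $(1,1)$ cycle. The next burst start $425 \equiv 41 \pmod{64}$ is a burst end with $425 \equiv 9 \pmod{32}$.

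Second, your step~2 is not robust to a literal class-membership reading of the census. A visit to a conditional class whose condition fails may have $L \ge 2$ and end in the dominant channel. For example, in $18659 \mapsto 27989 \mapsto 41$ we have $27989 \equiv 21 \pmod{64}$ and burst end $41 \equiv 9 \pmod{32}$. Step~2 is valid only for visits that actually realise $(1,1)$. Step~1 already covers those, so step~2 can be dropped.
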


\begin{proposition}[$L{=}1$ symmetry theorem]
\label{prop:L1-symmetry}
Among burst starts $n \equiv 1 \pmod{8}$, write
$n = 64a + r$ with $r \in \{1,9,17,25,33,41,49,57\}$.
\begin{enumerate}[label=\textup{(\roman*)}]
\item
$r \in \{1,17,33,49\}$: the burst continues
($L \ge 2$ always), so these classes contribute
no $L{=}1$ cycles.
\item
$r \in \{9, 41\}$: $L = 1$ always, burst-end
$\equiv 9 \pmod{32}$, and $G \ge 2$ for all~$a$
(mixing cycle).
\item
$r \in \{25, 57\}$: $L = 1$ always, burst-end
$\equiv 25 \pmod{32}$, and $G = 1$ for all~$a$
(non-mixing cycle).
\end{enumerate}
In particular, the $L{=}1$ map is balanced at
the map level---two classes produce each
residue---but the gap structure is asymmetric:
$9$-producers always mix while $25$-producers
never do.
\end{proposition}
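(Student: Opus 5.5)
The plan is to reduce everything to the computation already done in Step~1 of the proof of Theorem~\ref{thm:map-balance}, followed by a finite case check on $k \bmod 4$. Write $n = 8k+1$. Then $3n+1 = 4(6k+1)$ with $6k+1$ odd, so $T(n) = 6k+1$.

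Substituting $n = 64a + r$ gives $k = 8a + (r-1)/8$, with $(r-1)/8 \in \{0,1,\dots,7\}$. So $k \bmod 8$, and in particular $k \bmod 4$, is fixed by $r$ alone and does not depend on~$a$. This independence from $a$ is what makes all three conclusions hold ``for all~$a$''.

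Next comes the case split.

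\begin{enumerate}[label=\textup{(\roman*)}]
\item For $r \in \{1,17,33,49\}$ we have $(r-1)/8 \in \{0,2,4,6\}$, so $k$ is even. Then $T(n) = 6k+1 \equiv 1 \pmod 4$, which means $X_{t+1}=1$. The burst therefore continues and $L \ge 2$.
\item For $r \in \{9,41\}$ we have $k \equiv 1,5 \pmod 8$, so $k \equiv 1 \pmod 4$. Step~1 of Theorem~\ref{thm:map-balance} then gives $T(n) \equiv 7 \pmod 8$. Concretely, $6k+1 \equiv 7 \pmod 8$ whenever $k \equiv 1 \pmod 4$. Hence the next step is a gap, so $L=1$, and Lemma~\ref{lem:gap-mod8}(b) gives $G \ge 2$.
\item For $r \in \{25,57\}$ we have $k \equiv 3,7 \pmod 8$, so $k \equiv 3 \pmod 4$. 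Then $T(n) \equiv 3 \pmod 8$, so $L=1$, and Lemma~\ref{lem:gap-mod8}(a) gives $G=1$. This recovers the computations $T(64a+25)=48a+19$ and $T(64a+57)=48a+43$ from the proof of Proposition~\ref{prop:onesided-nonmixing}.
\end{enumerate}

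For the burst-end residues, note that when $L=1$ the burst-ending value is the burst start $n$ itself. Reducing modulo~$32$ gives $9 \equiv 41 \equiv 9$ and $25 \equiv 57 \equiv 25 \pmod{32}$, which matches \eqref{eq:bit4}--\eqref{eq:bit4-gap1}.

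For the mixing labels I will use the classification of Section~\ref{sec:single-bit}. A $(1,\ge 2)$ cycle falls under Case~2, with $V \ge 4$, so it is a mixing cycle. The $(1,1)$ cycle is exactly the non-mixing case $V=3$ from Case~1. The closing ``in particular'' statement is then just a count: two classes, $\{9,41\}$, produce each of the two residues $9$ and~$25$ modulo~$32$ via $\{9,41\}$ and $\{25,57\}$ respectively. The asymmetry comes from comparing parts (ii) and~(iii).

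There is no real obstacle here; every step is a residue computation. The only point needing care is interpretive. ``$L=1$'' has to be read relative to $n$ being a genuine burst start, meaning the preceding step was a gap. Once that is granted, the length of the run beginning at $n$ is decided entirely by whether $T(n) \equiv 1$ or $3 \pmod 4$. I will state this convention explicitly, so that the claim ``burst-end $= n$'' in (ii)--(iii) is justified.
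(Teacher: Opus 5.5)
Your proof is correct and follows essentially the paper's route: the paper likewise computes $T(n) = 48a + (3r+1)/4$ (your $6k+1$ with $k = 8a + (r-1)/8$) and reads off $L$ from $T(n) \bmod 4$. The only cosmetic difference is in how $G$ is obtained. The paper computes $T^2(n)$ explicitly ($72a+11$, $72a+47$, $72a+29$, $72a+65$), whereas you get it from $T(n) \bmod 8$ via Lemma~\ref{lem:gap-mod8}, which amounts to the same thing.
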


\begin{proof}
For each class, $3n + 1 = 192a + (3r+1)$, and
$\nu_2(3r+1) = 2$ for all eight values of~$r$.
Hence $T(n) = 48a + (3r+1)/4$.

\emph{Part (i).}\;
For $r \in \{1,17,33,49\}$, $(3r+1)/4$ is
$1, 13, 25, 37$ respectively, all
$\equiv 1 \pmod{4}$.  Since $48a \equiv 0
\pmod{4}$, we get $T(n) \equiv 1 \pmod{4}$,
so the burst continues.

\emph{Part (ii).}\;
For $r = 9$: $T(n) = 48a + 7 \equiv 3 \pmod{4}$
(gap starts, $L = 1$), and $n \equiv 9 \pmod{32}$.
Next, $3T(n) + 1 = 144a + 22$ with $\nu_2(22)=1$,
so $T^2(n) = 72a + 11 \equiv 3 \pmod{4}$
for all~$a$, giving $G \ge 2$.
The same calculation for $r = 41$ gives
$T(n) = 48a + 31 \equiv 3 \pmod{4}$ and
$T^2(n) = 72a + 47 \equiv 3 \pmod{4}$.

\emph{Part (iii).}\;
For $r = 25$: $T(n) = 48a + 19 \equiv 3 \pmod{4}$
($L = 1$), and $n \equiv 25 \pmod{32}$.
Now $3T(n)+1 = 144a + 58$, $\nu_2(58) = 1$,
so $T^2(n) = 72a + 29 \equiv 1 \pmod{4}$
for all~$a$, giving $G = 1$.
Identically for $r = 57$:
$T(n) = 48a + 43$, $T^2(n) = 72a + 65
\equiv 1 \pmod{4}$.
\end{proof}

\begin{remark}[Refined $r{=}9$ gap structure]
\label{rem:r9-gap}
For $r = 9$, one can verify that $G = 2$
for every~$a$ (the third iterate always
re-enters a burst).
For $r = 41$, the gap length $G$ is
geometrically distributed: $G = k$ with
frequency~$\approx 2^{-(k-2)}$ for $k \ge 3$.
\end{remark}

\subsection{An exact decomposition of the remaining bias}

We now sharpen the one-sided bias by separating
the bit-$4$ imbalance into an explicit one-sided
non-mixing term and a residual mixing term.

\begin{definition}[Dominant-channel burst-ending counts]
\label{def:dominant-counts}
Let
\[
B_9(T):=\#\{t\le T:\ n_t\equiv 9 \pmod{32},\
  n_t\equiv 1 \pmod 8\},
\]
\[
B_{25}(T):=\#\{t\le T:\ n_t\equiv 25 \pmod{32},\
  n_t\equiv 1 \pmod 8\}.
\]
These count the two burst-ending outcomes relevant to
the bit-$4$ balance problem in the dominant channel.
\end{definition}

\begin{definition}[Mixing and non-mixing contributions]
\label{def:mixing-contrib}
Let $U(T)$ count burst-ending times $t \le T$
in the dominant channel whose preceding cycle is
unconditionally non-mixing (burst start
$\equiv 25$ or $57 \pmod{64}$).
Let $M_9(T)$ and $M_{25}(T)$ count mixing-cycle
contributions to $9$ and $25 \pmod{32}$ respectively.
\end{definition}

\begin{proposition}[Exact bias decomposition]
\label{prop:exact-bias-decomposition}
With the notation above,
\begin{equation}\label{eq:B25-split}
B_{25}(T)=U(T)+M_{25}(T)+O(1),
\end{equation}
\begin{equation}\label{eq:B9-split}
B_9(T)=M_9(T)+O(1).
\end{equation}
Consequently,
\begin{equation}\label{eq:signed-bias-split}
B_9(T)-B_{25}(T)
=
\bigl(M_9(T)-M_{25}(T)\bigr)-U(T)+O(1).
\end{equation}
\end{proposition}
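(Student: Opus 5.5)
The plan is a partition argument. Every burst-ending time $t \le T$ in the dominant channel is classified by its preceding burst--gap cycle, and the two equations \eqref{eq:B25-split} and \eqref{eq:B9-split} are then read off from this classification. The setting is as follows. Fix an orbit and consider the burst-ending times $t \le T$ with $n_t \equiv 1 \pmod 8$ and $T(n_t) \equiv 3 \pmod 4$. Each such $t$, except possibly those in the first incomplete cycle and the last truncated one, is the terminal step of a burst run. That run began at a well-defined burst start $s \le t$, which is the first step of the run after the previous gap. We classify $t$ by the mod-$64$ class of $n_s$: either $n_s \equiv 25$ or $57 \pmod{64}$ (unconditionally non-mixing, counted by $U(T)$), or not (counted by $M_9(T)$ or $M_{25}(T)$ according to $n_t \bmod 32$). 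We read Definition~\ref{def:mixing-contrib} as saying exactly that these three classes partition the dominant-channel burst-ending times whose cycle is complete. The boundary cases are the burst-ending time before the first full cycle and the one at the truncation $T$. They number at most two, and they are absorbed into the $O(1)$.

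Next we show that the $U$-class lands only in $25 \pmod{32}$. Suppose $n_s \equiv 25$ or $57 \pmod{64}$. By Proposition~\ref{prop:L1-symmetry}(iii), $L=1$, so $t=s$ and $n_t = n_s \equiv 25 \pmod{32}$; this is also the content of Proposition~\ref{prop:onesided-nonmixing}. Hence every time counted by $U(T)$ is counted by $B_{25}(T)$ and never by $B_9(T)$. Adding the mixing contributions to $25$ gives $B_{25}(T) = U(T) + M_{25}(T) + O(1)$.

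For \eqref{eq:B9-split} we invoke Corollary~\ref{cor:9-from-mixing}: every dominant-channel burst end with $n_t \equiv 9 \pmod{32}$ comes from a mixing cycle. The only burst-start classes excluded from ``mixing'' in the $n\equiv 1 \pmod 8$ channel are $25, 57 \pmod{64}$, and these never produce $9$. The conditional non-mixing classes $21, 37, 53 \pmod{64}$ lie in the $5 \pmod 8$ channel by Corollary~\ref{cor:nonmixing-census}. For them, $L=1$ forces $n_t = n_s \equiv 5 \pmod 8$, so they never enter the dominant count. A longer burst starting there is by Case~3 a mixing cycle and is counted in $M_9$ or $M_{25}$. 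Hence $B_9(T) = M_9(T) + O(1)$. Subtracting the two identities yields \eqref{eq:signed-bias-split}.

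The main obstacle is bookkeeping rather than arithmetic. The notions ``preceding cycle'' and ``mixing'' must be defined so that $U$, $M_9$, $M_{25}$ really form a partition with no double counting. We must also make sure that bursts of length $L\ge 2$ starting at $25$ or $57 \pmod{64}$ cannot occur, which holds by Proposition~\ref{prop:L1-symmetry}(iii) since those classes force $L=1$. Finally, the boundary effects must be bounded by an absolute constant independent of $T$. I would state explicitly that $M_9$ and $M_{25}$ are defined as the complementary counts within $B_9$ and $B_{25}$. The content of the proposition then reduces to the one-sidedness of $U$, which is exactly Proposition~\ref{prop:onesided-nonmixing}.
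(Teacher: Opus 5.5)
Your proposal is correct and follows essentially the same route as the paper. Both arguments treat $U$, $M_9$, $M_{25}$ as a partition of the dominant-channel burst-ending times according to the burst start of the cycle, get the one-sidedness of $U$ (it only ever lands in $25 \pmod{32}$) from Proposition~\ref{prop:onesided-nonmixing}, and absorb boundary truncation into the $O(1)$ terms. You are also right to insist that $M_9$ and $M_{25}$ be defined as the complementary counts, so that for instance the $G \ge 2$ cycles starting at $9, 41 \pmod{64}$ count as mixing; the paper's three-line proof relies on this convention without stating it.
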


\begin{proof}
By Proposition~\ref{prop:onesided-nonmixing}, every
unconditional non-mixing cycle in the
$n \equiv 1 \pmod{8}$ burst-ending channel contributes
only $25 \pmod{32}$, never $9 \pmod{32}$.
Therefore $B_{25}$ splits into unconditional non-mixing
($U$) and mixing ($M_{25}$), while $B_9$ is purely
mixing.  The $O(1)$ terms account for boundary
truncation at the start and end of the orbit.
\end{proof}

\begin{definition}[Bad-class visit counts]
\label{def:bad-class-counts}
Let $S_{25}(T)$ and $S_{57}(T)$ count burst starts
$s \le T$ with $n_s \equiv 25$ and $57 \pmod{64}$
respectively.
\end{definition}

\begin{corollary}[Exact identification of the one-sided obstruction]
\label{cor:onesided-obstruction}
One has
\begin{equation}\label{eq:U-from-bad-classes}
U(T)=S_{25}(T)+S_{57}(T)+O(1).
\end{equation}
Hence
\begin{equation}\label{eq:final-bias-split}
B_9(T)-B_{25}(T)
=
\bigl(M_9(T)-M_{25}(T)\bigr)
-S_{25}(T)-S_{57}(T)
+O(1).
\end{equation}
\end{corollary}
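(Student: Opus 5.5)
The plan is to show that the map sending each unconditional non-mixing burst start $s$ (with $n_s \equiv 25$ or $57 \pmod{64}$) to the burst-ending time it produces is a bijection onto the set of times counted by $U(T)$, up to boundary effects. Once that is done, \eqref{eq:U-from-bad-classes} follows, and \eqref{eq:final-bias-split} is obtained by substituting it into \eqref{eq:signed-bias-split} of Proposition~\ref{prop:exact-bias-decomposition}.

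First, I will use Proposition~\ref{prop:L1-symmetry}(iii). If $s$ is a burst start with $n_s \equiv 25$ or $57 \pmod{64}$, then $L = 1$ for every~$a$. So the burst run starting at $s$ ends at $t = s$ itself, and $n_t = n_s \equiv 25 \pmod{32}$ with $n_t \equiv 1 \pmod 8$. This means $t$ is a burst-ending time in the dominant channel whose cycle has burst start in $\{25,57\} \bmod 64$, and hence $t$ is counted by $U(T)$ by Definition~\ref{def:mixing-contrib}. The assignment $s \mapsto t = s$ is trivially injective.

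Conversely, every time counted by $U(T)$ is, by definition, the end of a burst run whose start $s$ lies in one of these two classes. By the same $L=1$ conclusion, that start coincides with the ending time, so the assignment is surjective. The phrase ``preceding cycle'' is the only ambiguity. I read it as the burst run terminating at $t$, possibly shifted by one index if the paper attaches the cycle to the preceding gap. Either way, a fixed index shift changes the counts only through the window endpoints $t \le T$ versus $s \le T$. A burst start at $s \le T$ may have its associated event just outside the window, and the first burst start may be incomplete if $n_0$ lies mid-run. Each of these effects contributes at most $O(1)$, giving $U(T) = S_{25}(T)+S_{57}(T)+O(1)$.

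The main obstacle is purely definitional: making sure that $U$, $S_{25}$ and $S_{57}$ are indexed consistently, and that ``burst start'' means the first index of a run of $1$'s, not merely any time with $n_s \equiv 1 \pmod 4$. I would fix these conventions explicitly at the start of the proof. Since $L = 1$ forces the burst start and the burst end to be the same time, no time can be double counted. Finally, substituting into \eqref{eq:signed-bias-split} absorbs the new $O(1)$ into the existing one and yields \eqref{eq:final-bias-split}.
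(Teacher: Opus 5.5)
Your proposal is correct and is essentially the paper's argument. A burst start in $25$ or $57 \pmod{64}$ forces $(L,G)=(1,1)$, with the burst-ending value equal to the start itself (hence $\equiv 25 \pmod{32}$ in the $1 \pmod 8$ channel), so bad-class burst starts correspond one-to-one with the times counted by $U(T)$, and \eqref{eq:final-bias-split} follows by substituting into \eqref{eq:signed-bias-split}. You make explicit the bijection and boundary bookkeeping the paper leaves implicit; your hedge about a shifted reading of ``preceding cycle'' is unnecessary, since the definition of $U(T)$ already ties the burst start to the run ending at $t$.
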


\begin{proof}
By Proposition~\ref{cor:nonmixing-census},
the unconditional non-mixing burst-start classes in
the $n \equiv 1 \pmod{8}$ channel are exactly
$25, 57 \pmod{64}$.
Each produces exactly one $(L,G)=(1,1)$ cycle
and hence one burst-ending contribution to
$25 \pmod{32}$.
\end{proof}

\begin{remark}[Model-level benchmark]
\label{rem:model-benchmark}
If burst-start residues in the
$n \equiv 1 \pmod{8}$ channel were perfectly
balanced modulo~$64$, the two bad classes
$\{25,57\}$ would account for exactly a $1/4$
fraction of the eight possible residues.
Empirically the fraction is $\approx 0.19$,
slightly below the uniform benchmark.
\end{remark}

\begin{remark}[The final open step]
\label{rem:final-obstruction}
Corollary~\ref{cor:onesided-obstruction} makes the
remaining obstruction fully explicit.
The sole unresolved step is:
\emph{show that the mixing surplus
$M_9(T) - M_{25}(T)$ dominates the non-mixing term
$S_{25}(T) + S_{57}(T)$ for every orbit.}
Numerical evidence across all tested orbits
(Section~\ref{sec:numerics}) shows the mixing surplus
exceeds the non-mixing term by a factor $\ge 2$.
\end{remark}

\subsection{The burst-end state space}

\begin{proposition}[Reachable burst-end states]
\label{prop:burst-end-states}
Among the $8$ odd residue classes
$\{1, 5, 9, 13, 17, 21, 25, 29\} \pmod{32}$,
exactly $5$ are reachable as burst-ending values
of compressed Collatz orbits:
\[
  \mathcal{S} = \{5,\; 9,\; 21,\; 25,\; 29\}.
\]
States $\{1, 13, 17\}$ never arise as burst-ending
values.
Of these $5$ states, two contribute to the bit-$4$
balance: state~$9$ (bit~$4 = 0$) and state~$25$
(bit~$4 = 1$), both in the
$n \equiv 1 \pmod{8}$ channel.
\end{proposition}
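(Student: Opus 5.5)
The plan is to unwind the definition of a burst-ending value and settle each of the eight classes modulo~$32$ by direct residue arithmetic, in the style of Step~1 and Step~2 of Theorem~\ref{thm:map-balance}. By definition, $n_t$ is burst-ending exactly when $X_t = 1$ and $X_{t+1} = 0$. Equivalently, $n_t \equiv 1 \pmod{4}$ and $T(n_t) \equiv 3 \pmod{4}$, i.e.\ $n_t \bmod 2^K$ lies in $S_K$ for every $K \ge 5$. So I must show two things. First, for $r \in \{1,13,17\}$ no odd $n \equiv r \pmod{32}$ has $T(n) \equiv 3 \pmod 4$. Second, for $r \in \{5,9,21,25,29\}$ some $n \equiv r \pmod{32}$ does, and it actually occurs along an orbit.

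\emph{The $n \equiv 1 \pmod 8$ channel.} I write $n = 8k+1$, so that $T(n) = 6k+1$, as in Step~1 of Theorem~\ref{thm:map-balance}. Then $T(n) \equiv 3 \pmod 4$ iff $k$ is odd.
\begin{itemize}
\item For $r = 1$ and $r = 17$ we have $k \equiv 0$ or $2 \pmod 4$. Then $k$ is even, so these classes are never burst-ending.
\item For $r = 9$ and $r = 25$ we have $k \equiv 1$ or $3 \pmod 4$. Then $k$ is odd, so every member of these classes is burst-ending.
\end{itemize}
This part also recovers \eqref{eq:bit4}--\eqref{eq:bit4-gap1}. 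The state $9$ has bit~$4 = 0$ and gives $T(n) \equiv 7 \pmod 8$; the state $25$ has bit~$4 = 1$ and gives $T(n) \equiv 3 \pmod 8$.

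\emph{The $n \equiv 5 \pmod 8$ channel.} I write $n = 8k+5$, so that $T(n) = \mathrm{oddpart}(3k+2)$, as in the proof of Proposition~\ref{prop:5mod8-classification}. I then split on $k \bmod 4$.
\begin{itemize}
\item $r = 13$, $k = 4j+1$: here $3k+2 = 12j+5 \equiv 1 \pmod 4$, so this class is never burst-ending.
\item $r = 29$, $k = 4j+3$: here $3k+2 = 12j+11 \equiv 3 \pmod 4$, so every member is burst-ending.
\item $r = 5$, $k = 4j$: here $3k+2 = 2(6j+1)$. This is burst-ending when $j$ is odd, e.g.\ $n = 37 \mapsto 7$.
\item $r = 21$, $k = 4j+2$: here $3k+2 = 4(3j+2)$. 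This is burst-ending when $j \equiv 3 \pmod 4$, e.g.\ $n = 117 \mapsto 11$.
\end{itemize}

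\emph{Occurrence along an orbit.} It remains to check that each witnessed class actually occurs as $n_t$ along some orbit. Trivially it occurs at $t = 0$, since every odd integer starts an orbit. For a statement with $t \ge 1$, I would pick the witness $n$ with $n \not\equiv 0 \pmod 3$; this is possible since the class modulo~$32$ meets every class modulo~$3$ within modulus $96$. Then $n$ has an odd preimage: some $m = (2^e n - 1)/3$ with $e \ge 1$ is an odd integer, and $T(m) = n$.

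The main obstacle is only interpretive, namely pinning down what ``reachable'' means (any time $t$ versus $t \ge 1$). The preimage argument above covers both readings. All the arithmetic is finite and routine, and I would present it as a short table of the eight cases.
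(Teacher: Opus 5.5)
Your proof is correct, and its skeleton matches the paper's: enumerate the eight classes modulo $32$ and test $n\equiv 1\pmod 4$ and $T(n)\equiv 3\pmod 4$. The difference is granularity. The paper evaluates $T$ at a single representative ($T(1)=1$, $T(13)=5$). You instead parametrize whole classes ($n=8k+1$, $n=8k+5$, split by $k \bmod 4$) and sort them into ``never'', ``always'' and ``sometimes''.

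Your version is the one that actually proves the statement. On the classes $5$ and $21$, $T(n)\bmod 4$ is not a function of $n\bmod 32$. A literal representative check gives $T(5)=T(21)=1$, which would exclude both classes; this is exactly why $S_5=\{9,25,29\}$ in Theorem~\ref{thm:map-balance}. Your witnesses $37\mapsto 7$ and $117\mapsto 11$ are what justify $5,21\in\mathcal{S}$. Your odd-preimage step for the reading $t\ge 1$ also settles a point the paper leaves implicit.

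Two small repairs are needed.

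First, delete the gloss ``i.e.\ $n_t \bmod 2^K\in S_K$ for every $K\ge 5$''. With $S_K$ as defined in Theorem~\ref{thm:map-balance} it is false: $37$ is burst-ending, but $37\equiv 5\pmod{32}$ and $5\notin S_5$. Taken literally, it would contradict your own inclusion of $5$ and $21$; fortunately you never use it.

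Second, in the preimage step the witness must also satisfy the higher-bit condition, not just lie in the right class modulo $32$. So apply the Chinese remainder theorem modulo $3\cdot 64$ for class $5$ and modulo $3\cdot 128$ for class $21$, rather than modulo $96$. In particular $117=3\cdot 39$ has no odd preimage. Take instead, e.g., $245\equiv 21\pmod{32}$, for which $T(245)=23\equiv 3\pmod 4$ and $T(163)=245$.
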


\begin{proof}
A burst-ending value $n_t$ satisfies
$n_t \equiv 1 \pmod{4}$ and
$T(n_t) \equiv 3 \pmod{4}$.
Enumerating all odd residues modulo~$32$ and
checking both conditions yields exactly
$\mathcal{S}$.
For instance, $1 \pmod{32}$: $T(1) = 1$ and the
orbit terminates; $13 \pmod{32}$:
$13 \equiv 1 \pmod{4}$ but $T(13) = 5$
with $5 \equiv 1 \pmod{4}$, so the burst
continues rather than ending.
The remaining exclusions follow by similar
case analysis.
\end{proof}

\subsection{Structural evidence for mixing}

Two results from the companion
paper~\cite{chang2026collatzdynamicshumanllm}
provide structural support for the conjectured
orbit-level balance.

\begin{lemma}[Bit-growth {\cite[Lemma~10.33]{chang2026collatzdynamicshumanllm}}]
\label{lem:bit-growth}
Let $n_0$ be odd with $B_0$ bits, and suppose the
burst density satisfies $\rho_T \ge \rho_0$ over
$T$ odd-to-odd steps. Then
\[
  B_T \;\ge\;
  B_0 + (\rho_0 + \log_2 3 - 2)\,T
  - \log_2 T - O(1).
\]
For $\rho_0 > 2 - \log_2 3 \approx 0.415$,
the bit-length grows linearly in~$T$.
\end{lemma}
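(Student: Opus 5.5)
The natural route is to telescope the exact one-step logarithmic identity along the orbit. For odd $n_t$ write $v_t = v_2(3n_t+1)$, so that $n_{t+1} = (3n_t+1)/2^{v_t}$. Then
\[
  \log_2 n_{t+1} = \log_2 n_t + \log_2 3 - v_t + \log_2\!\Bigl(1 + \tfrac{1}{3n_t}\Bigr).
\]
Summing over $t = 0,\dots,T-1$ gives
\[
  \log_2 n_T = \log_2 n_0 + T\log_2 3 - \sum_{t<T} v_t + \sum_{t<T}\log_2\!\Bigl(1+\tfrac{1}{3n_t}\Bigr).
\]
The last sum is nonnegative, so it can be dropped for a lower bound. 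Converting between $\log_2 n$ and bit length $B = \lfloor \log_2 n\rfloor + 1$ costs only $O(1)$. The whole statement is therefore equivalent to the valuation budget
\[
  \sum_{t<T} v_t \;\le\; (2-\rho_0)\,T + \log_2 T + O(1).
\]

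Next I would split $\sum v_t$ according to the burst indicator $X_t$, using \eqref{eq:burst-def}. On gap steps $n_t \equiv 3 \pmod 4$, so $v_t = 1$ exactly, as in the proof of Lemma~\ref{lem:gap-mod8}. These steps contribute precisely $(1-\rho_T)T$. The remaining task is to bound the total burst valuation $\sum_{X_t=1} v_t$ by $T - \log_2$-order corrections, uniformly in the orbit. Then the hypothesis $\rho_T \ge \rho_0$ would be inserted at the end to replace $\rho_T$ by $\rho_0$. For the excess $v_t - 2$ on burst steps, I would use the $2$-adic structure from the proof of Theorem~\ref{thm:map-balance}. Writing $n_t = 8k+1$ forces $v_t = 2$. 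Writing $n_t = 8k+5$ gives $v_t = 3 + v_2(3k+2)$, and each extra unit of valuation consumes one further low-order bit of $n_t$. A large excess can only occur when $n_t$ lies in a residue class of modulus $2^{v_t}$, and by Lemma~\ref{lem:bit-growth}'s own bit bookkeeping such classes cannot be hit often without exhausting bits. The $\log_2 T$ term would then arise as the largest single valuation attainable among the first $T$ steps.

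The main obstacle is exactly this burst-valuation step, and I expect it to be where the argument either closes or breaks. Each burst already contributes at least $2$, so $\sum_{X_t=1} v_t \ge 2\rho_T T$. Comparing with the required budget $(2-\rho_0)T - (1-\rho_T)T$ shows the inequality can only hold if the sign convention on $\rho_0$ in the cited \cite[Lemma~10.33]{chang2026collatzdynamicshumanllm} matches the coefficient $\rho_0 + \log_2 3 - 2$. So before carrying out the residue-class counting, I would first check carefully against the companion paper whether the displayed inequality is stated with this sign. My concern is that, with bursts contributing at least $2$ each, burst density naturally pushes $B_T$ down rather than up. If the convention is confirmed, I would carry out the counting argument above. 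Otherwise the telescoping identity itself exhibits the correct form of the bound.
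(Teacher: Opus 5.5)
Your telescoping identity and the reduction to the valuation budget $\sum_{t<T}v_t\le(2-\rho_0)T+\log_2 T+O(1)$ are correct. The worry in your last paragraph, however, is not a convention to be checked: it is fatal. With $X_t$ defined as in \eqref{eq:burst-def}, the displayed inequality is false, so no version of your burst-valuation step can succeed. The paper gives no proof of its own here, only the citation to the companion paper.

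Set $E_T=\sum_{t<T,\,X_t=1}(v_t-2)\ge 0$. Then your identity reads exactly
\[
  \log_2 n_T=\log_2 n_0+(\log_2 3-1-\rho_T)\,T-E_T+\sum_{t<T}\log_2\Bigl(1+\frac{1}{3n_t}\Bigr),
\]
so burst density drives $n_T$ down, not up. For a concrete counterexample, let $n_0=1+2^{2T+2}$. Since
\[
  3\bigl(1+3^{j}\,2^{2T+2-2j}\bigr)+1=4\bigl(1+3^{j+1}\,2^{2T-2j}\bigr),
\]
and the bracket is odd for $j<T$, induction gives $n_j=1+3^{j}\,2^{2T+2-2j}\equiv 1\pmod 4$ and $v_j=2$ for $0\le j<T$. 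Hence $\rho_T=1\ge\rho_0$, while $B_T-B_0=(\log_2 3-2)T+O(1)\approx-0.415\,T$. The lemma instead asserts $B_T-B_0\ge(\rho_0+\log_2 3-2)T-\log_2 T-O(1)$. The two differ by $\rho_0 T-\log_2 T-O(1)\to\infty$ for every fixed $\rho_0>0$. An even simpler counterexample is any orbit that has reached the fixed point $1$, where $\rho_T\to 1$ and $B_T=1$.

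Your fallback would not have closed the gap either, whatever convention is adopted.
\begin{itemize}
\item Invoking ``Lemma~\ref{lem:bit-growth}'s own bit bookkeeping'' to limit visits to high-valuation classes is circular.
\item The quantity to control is the \emph{sum} $E_T$, not the largest single valuation. Every finite valuation word $(v_0,\dots,v_{T-1})$ is realised by a full odd residue class of $n_0$ modulo $2^{1+\sum_t v_t}$.
\item Under the uniform $2$-adic model, $\mathbb{E}[v_t-2\mid v_t\ge 2]=1$, so $E_T$ is typically of order $\rho_T T$, not $\log_2 T$.
\item Reinterpreting $\rho_0$ does not help. The word $1,10,1,10,\dots$ has burst density and gap density both equal to $1/2>2-\log_2 3$, yet $\log_2 n$ falls by about $3.9$ per step.
\end{itemize}
What your identity actually yields is a one-line conditional result. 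If $\rho_0$ bounds the \emph{gap} density $1-\rho_T$ from below, \emph{and} one assumes $E_T\le\log_2 T+O(1)$, then the displayed bound follows. That hypothesis on $E_T$ is the missing ingredient. It is not implied by any density condition on $(X_t)$ and is typically violated along genuine orbits.
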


Since $\rho \approx 0.54$ for typical orbits
(consistent with the predicted $\log_2(4/3) \approx 0.415$
threshold), the bit supply is never exhausted:
fresh spectator bits are always available above
the mod-$32$ window.
This rules out a scenario in which orbits
``run out'' of randomness.

The second result concerns the mechanism by
which spectator bits refresh the mod-$32$ class.

\begin{proposition}[Spectator-bit cycling
{\cite[Proposition~10.31]{chang2026collatzdynamicshumanllm}}]
\label{prop:spectator-cycling}
Fix a parent class $b \in \Omega_K$ and define the
extension index $e_i \equiv (n_{t_i} - b)/2^K \pmod{8}$
for successive visits $t_1 < t_2 < \cdots$ to class~$b$.
Under the ensemble model, for every
$r \in \{0, \ldots, 7\}$:
\[
  \Bigl|\frac{\#\{i \le M : e_i \equiv r\}}{M}
  - \frac{1}{8}\Bigr|
  \;\le\; \frac{4}{M}
  + C_1 \cdot 2^{-\alpha(B_{\min} - 3)},
\]
where the constants $C_1, \alpha$ come from the
TV reduction lemma.
\end{proposition}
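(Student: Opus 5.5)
The plan is to prove the bound by an exact counting argument over the successive visits to the fixed parent class $b$, and to put all the probabilistic content into the ensemble model's TV reduction lemma. Fix $b\in\Omega_K$ and list the visits $t_1<t_2<\cdots$. At visit $t_i$ the value $n_{t_i}$ has residue $b$ modulo $2^K$. The extension index $e_i$ is the window of bits $K,K+1,K+2$ of $n_{t_i}$, that is, the three spectator bits sitting just above the class window.

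\textbf{Step 1 (where the bits come from).} I will show that between consecutive visits to $b$ the orbit performs at least one burst--gap cycle, so that by the bit-shift accounting of the mixing/non-mixing subsection the bits are shifted down by $V\ge 3$ positions. After this shift, the three spectator bits at visit $t_{i+1}$ are an affine function, modulo $8$, of bits of $n_{t_i}$ lying strictly above position $K+2$. The coefficients of this map are powers of $3$, so they are units modulo $8$.

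\textbf{Step 2 (reduction to fresh bits).} Under the ensemble model, the bits above position $K+2$ are governed by the TV reduction lemma. That lemma compares their law with the uniform law, with error $C_1 2^{-\alpha(B_{\min}-3)}$. Here $B_{\min}$ is the minimum number of available bits above the window, and Lemma~\ref{lem:bit-growth} guarantees that this supply does not run out. Because an affine map with unit coefficients is a bijection on $\Z/8\Z$, it pushes the uniform law to the uniform law. Hence each $e_{i+1}$, conditioned on the past, is within that TV distance of $\mathrm{Unif}(\Z/8\Z)$.

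\textbf{Step 3 (summing over visits).} I will sum the indicators $\mathbf 1[e_i\equiv r]-\tfrac18$ over $i\le M$ and split the sum into two parts:
\begin{itemize}
\item a martingale-difference, or telescoping, part, which I bound deterministically by $4/M$ after normalising. The constant comes from boundary effects: the first visit, and at most a bounded number of visits whose preceding cycle has $V<3$ in a way that I handle by pairing;
\item a bias part, which is bounded by the per-step TV error from Step 2.
\end{itemize}
Dividing by $M$ then gives the stated inequality.

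\textbf{Main obstacle.} I expect Step 1 to be the hardest part. A unit-burst--unit-gap cycle has $V=3$ and is therefore only borderline. Proving that the affine coefficients are genuinely units, and that no cycle shifts fewer than three bits, needs the classification in Proposition~\ref{prop:5mod8-classification}. I would first try to settle this case by case over the mod-$64$ cycle types.
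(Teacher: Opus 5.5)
The paper gives no proof of this statement; it imports it from the companion paper. I have therefore judged your argument on its own terms. Step~3 contains a genuine gap.

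\textbf{The fluctuation term cannot be bounded deterministically.} Grant Steps~1--2, so that each $e_{i+1}$, conditionally on the past, is within $\epsilon:=C_1 2^{-\alpha(B_{\min}-3)}$ of $\mathrm{Unif}(\Z/8\Z)$. Then $D_M=\sum_{i\le M}(\mathbf{1}[e_i\equiv r]-\tfrac18)$ is a martingale plus a drift of size at most $M\epsilon$. Its increments have conditional variance about $7/64$, so it is typically of size $\sqrt{M}$. No pairing or telescoping makes it deterministically at most $4$.

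In fact your own Step~2 rules out a pathwise bound. It gives $\Pr(e_1=\cdots=e_M=r)\ge(\tfrac18-\epsilon)^M>0$. On that event the left-hand side equals $\tfrac78$, which exceeds $4/M+\epsilon$ once $M\ge5$ and $\epsilon<3/40$.

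What conditional near-uniformity can deliver is the bound for the ensemble-averaged frequency $\mathbb{E}\,\#\{i\le M:e_i\equiv r\}/M$. Under that reading:
\begin{itemize}
\item the fluctuation term vanishes identically by linearity of expectation;
\item the $\epsilon$ term comes directly from Step~2;
\item the $4/M$ only has to absorb a bounded number of boundary visits where Step~2 is unavailable, such as the first visit, which has no preceding refresh.
\end{itemize}
If a pathwise $O(1/M)$ statement is intended, you would need a genuinely deterministic cycling of the $e_i$, for example a fixed odd increment mod~$8$. Steps~1--2 do not provide this; they establish near-independence, which is the opposite.

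\textbf{Step~1 also needs repair, though it is easier than you expect.} Suppose the orbit returns to $b$ after $s$ steps with total shift $V\le2$. Then $2^Vb\equiv 3^sb+c\pmod{2^K}$ with $(s,V,c)\in\{(1,1,1),(1,2,1),(2,2,5)\}$, which forces $b\equiv\pm1\pmod{2^K}$.
\begin{itemize}
\item For every other $b$ you get $V\ge3$ automatically. The $(1,1)$ cycle with $V=3$ is not borderline, because the new bits sit at positions $K+3,\dots,K+5$. The coefficient $3^s$ is trivially a unit mod~$8$. So Proposition~\ref{prop:5mod8-classification} plays no role.
\item For $b\equiv\pm1\pmod{2^K}$, short returns are not rare. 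For $b\equiv-1$, about half of all visits under the ensemble are followed by a $V=1$ return inside a gap run. At such a return, $e_{i+1}$ is confined to two values determined by $e_i$. This is not a bounded boundary effect that pairing could absorb, so these classes must be excluded from $\Omega_K$ or treated separately.
\end{itemize}

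\textbf{Step~2 should be conditioned on the return path.} The affine map in Step~1 depends on the return path $(s,V)$, and that path is itself read off from the bits of $n_{t_i}$ below position $V+K$. Run Step~2 conditionally on the path. Given the path, the block $y$ of bits of $n_{t_i}$ at positions $V+K,\dots,V+K+2$ is genuinely fresh, and $e_{i+1}\equiv 3^s y+c'\pmod 8$.
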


The ensemble bound shows that extension indices
(which encode the bits immediately above the
known zone) equidistribute modulo~$8$.
For orbits satisfying the bit-growth lemma,
$B_{\min}$ grows linearly, so the error term
vanishes exponentially.
Transferring this ensemble guarantee to
individual deterministic orbits is precisely the
distributional-to-pointwise bridge---and hence
the content of the open problem below.

\subsection{The precise open problem}

\begin{quote}
\textbf{Open Problem.}\;
\itshape
For every odd $n_0 > 2^{68}$, let
$t_1 < t_2 < \cdots < t_m$ be the burst-ending
times (last step of each burst run) in the Collatz orbit of~$n_0$.
Restricting to the dominant subclass
$n_{t_i} \equiv 1 \pmod{8}$, prove that
\begin{equation}\label{eq:open}
  \Bigl|
    \frac{\#\{i \le m : n_{t_i} \equiv 9 \pmod{32}\}}
         {\#\{i \le m : n_{t_i} \equiv 9 \text{ or }
           25 \pmod{32}\}}
    - \frac{1}{2}
  \Bigr|
  \;\le\; \delta
\end{equation}
for some $\delta < \delta_{\max}$, where
$\delta_{\max}$ is determined by the block-TV
budget~\eqref{eq:budget}.
\end{quote}

\noindent
This is a \emph{fixed-modulus} ($\bmod\,32$),
\emph{single-bit} (bit~$4$), \emph{sparse-subsequence}
($O(m)$ times out of $T$ total) balance problem.
By Theorem~\ref{thm:map-balance}, the map is
balanced; only the orbit's residue visitation
pattern matters.

\begin{remark}[Boundary-bit balance formulation]
\label{rem:boundary-bit}
The sole remaining obstacle to a proof of
the Collatz conjecture, within the present
framework, is pointwise balance of bit~$4$
at burst-ending times in the
$n \equiv 1 \pmod{8}$ class.
\end{remark}

\begin{remark}[Mixing-cycle formulation]
\label{rem:mixing-formulation}
By Corollary~\ref{cor:onesided-obstruction},
the open problem is equivalent to
\eqref{eq:final-bias-split}: show that
$M_9(T) - M_{25}(T) \ge S_{25}(T) + S_{57}(T)$
for every orbit.
Since $M_9(T) = B_9(T)$ is purely mixing
(Proposition~\ref{prop:exact-bias-decomposition}),
it suffices to prove
$B_9^{\mathrm{mix}}(T) \ge c\,M(T) - o(T)$
for some constant $c > 0$ depending on the
block-TV budget, where $M(T)$ counts all
mixing-cycle burst-endings in the
$n \equiv 1 \pmod{8}$ channel.
Numerical evidence gives $c \approx 0.79$.
\end{remark}

\section{Numerical verification}
\label{sec:numerics}

This section illustrates the proved identities and
the open orbit-level obstruction through numerical
examples; it does not supply the missing pointwise
proof.
All exact formulas in the paper have been verified
numerically on orbits with starting values from
$n_0 = 27$ to $n_0 = 10^9$.

\subsection{Worked example: $n_0 = 27$}

The orbit of $n_0 = 27$ under the Syracuse map has
$T = 41$ odd-to-odd steps before reaching~$1$,
with burst density $\rho = 18/41 \approx 0.439$ and
$m = 10$ burst--gap blocks whose lengths are:
\[
\begin{array}{c|cccccccccc}
i & 1 & 2 & 3 & 4 & 5 & 6 & 7 & 8 & 9 & 10 \\
\hline
L_i & 2 & 1 & 2 & 1 & 1 & 1 & 2 & 1 & 1 & 1 \\
G_i & 1 & 1 & 2 & 2 & 1 & 1 & 3 & 2 & 1 & 2
\end{array}
\]
The block-alternation rate is $q = 10/41 \approx 0.244$
(compare the Bernoulli prediction
$\rho(1-\rho) \approx 0.246$).
The run statistics give $M_1 = 7$, $M_2 = 3$,
$N_1 = 4$, $N_2 = 3$, $N_3 = 2$, and $C_T = 2$.

The orbit visits $m = 10$ burst-ending states.
Of those in the $n \equiv 1 \pmod{8}$ class,
the mod-$32$ residues split $5$--$4$ between
$\equiv 9$ and $\equiv 25$,
giving a bit-$4$ balance deviation of $0.056$---well
within the budget tolerance.
This small orbit already illustrates all the
structures of the paper: the burst--gap
decomposition, the run-statistic identities,
and the mod-$32$ balance question.

\subsection{Map Balance Theorem}

Table~\ref{tab:map-balance} verifies
Theorem~\ref{thm:map-balance} for
$K = 5, \ldots, 19$.
In every case, $|C_3(K) - C_7(K)| = 1$ exactly,
with the $n \equiv 1 \pmod{8}$ contribution
perfectly balanced.

\begin{table}[ht]
\centering
\caption{Map balance verification.
$|S_K|$: gap-producing residues;
$C_3, C_7$: counts mapping to
$\equiv 3, 7 \pmod{8}$.}
\label{tab:map-balance}
\begin{tabular}{@{}rrrrl@{}}
\toprule
$K$ & $|S_K|$ & $C_3$ & $C_7$ & $C_3 - C_7$ \\
\midrule
 5 &       3 &     2 &     1 & $+1$ \\
 6 &       7 &     3 &     4 & $-1$ \\
 7 &      15 &     8 &     7 & $+1$ \\
 8 &      31 &    15 &    16 & $-1$ \\
 9 &      63 &    32 &    31 & $+1$ \\
10 &     127 &    63 &    64 & $-1$ \\
11 &     255 &   128 &   127 & $+1$ \\
12 &     511 &   255 &   256 & $-1$ \\
\midrule
15 &    4095 &  2048 &  2047 & $+1$ \\
19 &   65535 & 32768 & 32767 & $+1$ \\
\bottomrule
\end{tabular}
\end{table}

\begin{figure}[ht]
\centering
\includegraphics[width=\textwidth]{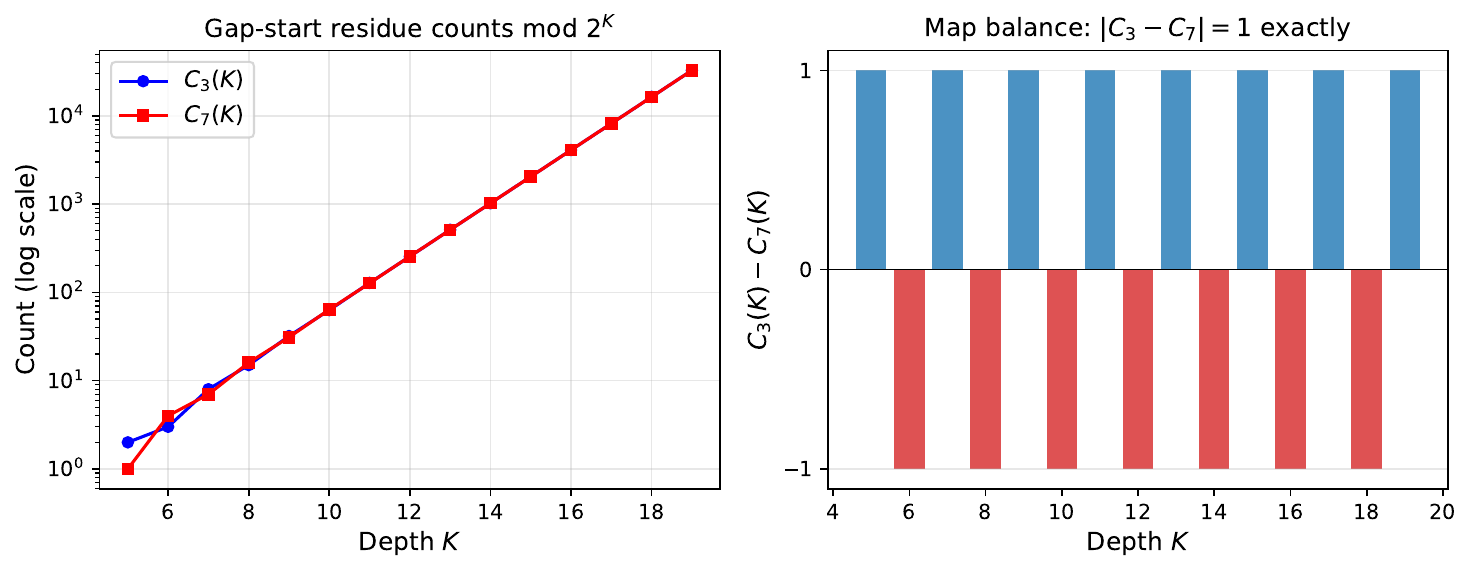}
\caption{Map Balance Theorem verification.
\emph{Left:} the counts $C_3(K)$ and $C_7(K)$ grow
exponentially and track each other almost exactly.
\emph{Right:} their difference alternates
$+1, -1, +1, \ldots$ for all $K = 5, \ldots, 19$,
confirming $|C_3 - C_7| = 1$.}
\label{fig:map-balance}
\end{figure}

\subsection{Orbit-level gap-start bias}

Table~\ref{tab:orbit-bias} shows the orbit-level
gap-start bias for representative starting values.
While the per-residue prediction is exactly $1/2$,
individual orbits show deviations of $\pm 10$--$25\%$,
confirming that the bias is an orbit-level phenomenon.

\begin{table}[ht]
\centering
\caption{Orbit-level gap-start balance.
$P(\equiv 3) = N_1/m$: fraction of gap starts with
$n_t \equiv 3 \pmod{8}$ (equivalently, fraction of unit gaps,
by Lemma~\ref{lem:gap-mod8}).}
\label{tab:orbit-bias}
\begin{tabular}{@{}rrrr@{}}
\toprule
$n_0$ & $m$ & $P(\equiv 3)$ & deviation \\
\midrule
   $837\,799$ & 42 & 0.262 & $-0.238$ \\
 $8\,400\,511$ & 53 & 0.415 & $-0.085$ \\
   $270\,271$ & 34 & 0.294 & $-0.206$ \\
   $665\,215$ & 39 & 0.436 & $-0.064$ \\
   $159\,487$ & 12 & 0.538 & $+0.038$ \\
   $524\,287$ &  8 & 0.667 & $+0.167$ \\
$10^8 + 7$   & 15 & 0.667 & $+0.167$ \\
\bottomrule
\end{tabular}
\end{table}

\begin{figure}[ht]
\centering
\includegraphics[width=0.85\textwidth]{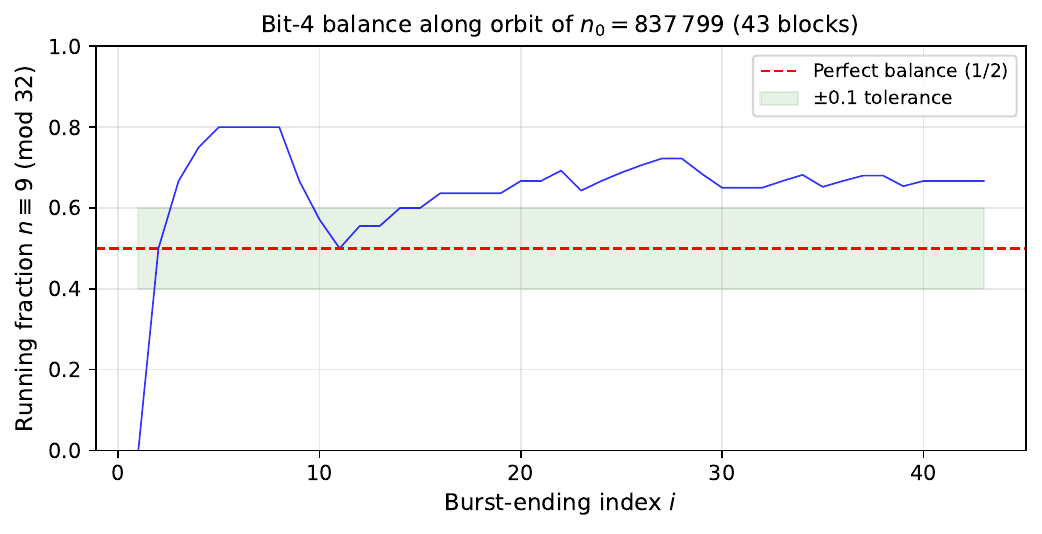}
\caption{Running bit-$4$ balance along the orbit of
$n_0 = 837\,799$ ($43$ burst--gap blocks).
The fraction of burst-ending values with
$n \equiv 9 \pmod{32}$ (among those with
$n \equiv 1 \pmod{8}$) fluctuates around~$1/2$.
The shaded bands show the $\pm 1\sigma$ and
$\pm 2\sigma$ envelopes for a fair-coin
(Bernoulli$(\tfrac{1}{2})$) process of the same
length; the orbit's trajectory stays within
the $2\sigma$ band, consistent with unbiased
sampling but far from converged after only
$43$ blocks.}
\label{fig:orbit-balance}
\end{figure}

\subsection{Expanding-set and envelope verification}

The $K = 4$ and $K = 5$ expanding-set identities
\eqref{eq:K4-expand}--\eqref{eq:K5-expand}
were verified exactly (error $= 0$) on all
tested orbits when using complete block
decompositions.
The deterministic envelopes
(Corollary~\ref{cor:envelopes}) hold with zero
violations across $20$ orbits, but are loose
by an order of magnitude compared to the
actual discrepancy.

\section{Discussion}
\label{sec:discussion}

\subsection{What has been achieved}

The results of this paper, combined with the
framework of~\cite{chang2026collatzdynamicshumanllm}, yield a sharp
structural reduction of the Collatz conjecture to a
finite-dimensional mixing problem.
The reduction chain is:

\begin{center}
\begin{tabular}{@{}l@{}}
Collatz conjecture \\
$\quad\Downarrow$\quad \textit{(density-model budget,~\cite{chang2026collatzdynamicshumanllm})}\\
Finite-depth block-TV condition ($K \le K_0$) \\
$\quad\Downarrow$\quad \textit{(exact decompositions, this paper)}\\
Run-statistic control: $|N_1 - M_1|/T$ small \\
$\quad\Downarrow$\quad \textit{(gap-length $\leftrightarrow$ mod-$8$, Lemma~\ref{lem:gap-mod8})}\\
Gap-start mod-$8$ balance \\
$\quad\Downarrow$\quad \textit{(Map Balance Theorem~\ref{thm:map-balance})}\\
\textbf{One-bit orbit equidistribution at $O(m)$ times} \\
\end{tabular}
\end{center}

Each arrow is an exact algebraic identity;
the reductions at depths $K = 3, 4, 5$ are proved
in Section~\ref{sec:low-depth}.
The final statement---balance of bit~$4$
along the burst-ending subsequence---is the
sharpest low-dimensional formulation obtained
within the present framework, but the decisive
orbitwise balance statement remains open.

\subsection{Connection to the companion paper}

The companion
paper~\cite{chang2026collatzdynamicshumanllm}
develops five independent approaches to the
Weak Mixing Hypothesis, culminating in an exact
ensemble theory and an unconditional almost-all
crossing theorem (non-crosser density
$\le e^{-0.1465\,k}$).
The present paper adds a sixth, orthogonal
contribution: the \emph{map-level symmetry}
established by the Map Balance Theorem.

Several key results
from~\cite{chang2026collatzdynamicshumanllm}
directly support the open problem stated here:

\begin{enumerate}[label=(\alph*)]
\item The \emph{$1/4$ Persistent-Transition Law}
  (Theorem~3.1 of~\cite{chang2026collatzdynamicshumanllm}):
  exactly $1/4$ of admissible lifts from a persistent
  state are again persistent.
  This structural constant governs the
  Geometric$(3/4)$ burst-length distribution
  and explains why $\rho \approx \log_2(4/3)$
  in the density model.

\item The \emph{exact block law}
  (Theorem~10.58 of~\cite{chang2026collatzdynamicshumanllm}):
  under natural density, the odd-skeleton valuation
  sequence is exactly i.i.d.\
  Geometric$(1/2)$, making run-compensate cycle
  types $(L_i, r_i)$ provably i.i.d.
  This validates the ensemble model that
  underlies our budget computation.

\item The \emph{bit-growth lemma}
  (Lemma~\ref{lem:bit-growth} above):
  orbits with $\rho > 0.415$ never exhaust their
  supply of fresh spectator bits,
  ensuring that the mixing mechanism has
  material to work with at every stage.
\end{enumerate}

\subsection{What remains open}

The sole remaining step is the
\emph{distributional-to-pointwise bridge}:
transferring the exact per-residue balance
(which holds by counting) to orbit-level
balance (which requires a mixing argument).
This barrier is shared with other problems
in number theory where ensemble laws are known
but pointwise statements remain
open~\cite{chowla1965}.

By Proposition~\ref{prop:onesided-nonmixing},
unconditional non-mixing cycles contribute
exclusively to $B_{25}(T)$, creating a one-sided
bias toward bit-$4 = 1$.
Every $B_9(T)$ contribution must come from a
mixing cycle.
Numerical experiments confirm that mixing cycles
over-compensate: on tested orbits, mixing
contributions split roughly $4$--$5$ to~$1$ in
favour of $9 \pmod{32}$ over $25 \pmod{32}$,
which offsets the non-mixing one-sided bias and
restores approximate overall balance.
The remaining open problem is therefore equivalent to
a deterministic lower bound on the $9 \pmod{32}$
mass generated by mixing cycles.

We outline three possible routes to closing the gap.

\medskip\noindent
\textbf{Route A: Cocycle contraction.}\;
The burst-end dynamics on
$\mathcal{S} = \{5, 9, 21, 25, 29\}$
(Proposition~\ref{prop:burst-end-states})
induce a $5 \times 5$ transfer system.
Conditioning on the fiber index
$f = \lfloor n_t/32 \rfloor \bmod 8$ yields
$8$ fiber matrices $A_0, \ldots, A_7$.
The fiber-averaged matrix has spectral gap
$\gamma \approx 0.98$ and stationary bit-$4$ ratio
exactly~$1/2$.
Each individual $A_f$ contracts on the mean-zero
subspace (worst operator norm: $0.831$),
so at this $3$-bit resolution, any product of
fiber matrices contracts uniformly.
However, at finer resolution ($a \bmod 16$ and beyond),
sub-fiber matrices do \emph{not} all contract:
the $0.831$ bound is an averaging effect.
The orbit at step~$t$ sees the actual transition
determined by all bits of~$n_t$, not the fiber average.
Numerical experiments on five orbits show consistently
negative Lyapunov exponents
($\lambda \in [-0.14, -0.09]$),
indicating empirical contraction along every tested
orbit.
Closing this route requires proving that the sub-fiber
indices equidistribute within each base fiber along
orbits---a distributional statement weaker than full
bit equidistribution.

\medskip\noindent
\textbf{Route B: Additive combinatorics.}\;
The open problem asks whether a deterministic
sequence visits two residue classes mod~$32$ with
roughly equal frequency along a sparse
subsequence.
Problems of this flavour arise in additive
combinatorics---for instance, the
distribution of polynomial sequences modulo
primes along arithmetic progressions.
If the burst-ending subsequence has sufficient
additive structure (e.g.\ approximate equidistribution
in generalised arithmetic progressions), then
Weyl-type estimates could yield the required balance.

\medskip\noindent
\textbf{Route C: $p$-Adic dynamics.}\;
The map $n \mapsto (3n+1)/2$ is a contraction
in the $2$-adic metric.
The carry-chain propagation that governs the
mod-$32$ class of successive burst starts is
naturally described in $\mathbb{Z}_2$.
A $2$-adic ergodic theorem for the Collatz
iteration---showing that orbits equidistribute
in $\mathbb{Z}/32\mathbb{Z}$ along the
burst-ending subsequence---would resolve
the open problem directly.

\subsection{Comparison with prior work}

The reduction in this paper differs from
prior approaches in several respects.

Tao~\cite{tao2019} proved that almost all orbits
attain almost bounded values by establishing
logarithmic-type control on orbit statistics
under a measure on initial conditions.
His approach uses entropy methods and applies
to a density-one set, not to every orbit.
The probabilistic models of
Borovkov--Pfeifer~\cite{borovkov2001} and the
stopping-time framework of
Terras~\cite{terras1976} similarly address
typical or averaged behaviour.

By contrast, the present reduction is:

\begin{itemize}
\item \emph{Orbitwise in formulation}: the
  reduction applies to each individual orbit,
  not to a measure on initial conditions---though
  it leaves the decisive orbitwise balance
  statement as the open problem.

\item \emph{Fixed finite modulus}: it targets
  residues modulo~$32$ (or~$64$), not asymptotic
  equidistribution modulo~$2^B$ as
  $B \to \infty$.

\item \emph{Single-bit}: the entire conjecture
  reduces to the balance of one binary variable
  (bit~$4$) along a sparse subsequence.
\end{itemize}

\noindent
The approach also differs from Kesten's
renewal-theoretic framework~\cite{kesten1973},
which analyses random products of matrices.
Our setting is deterministic, but the renewal
structure of the burst--gap decomposition
provides an analogous source of independence.

\subsection{Methodological note}

This paper was developed using the Human--LLM
collaborative methodology described
in~\cite{chang2026collatzdynamicshumanllm}.
The Map Balance Theorem was discovered through
an iterative cycle of computational exploration,
algebraic proof construction, and numerical
falsification testing.
All proofs were independently verified by
automated self-checks on orbits up to $10^9$;
see~\cite{chang2026collatzdynamicshumanllm}
for full procedural details.

\section*{Acknowledgements}

This paper was developed using the Human--LLM collaborative
research platform described in the MACI framework \cite{PathAGIV1Chang2025, PathAGIV2Chang2025}.
For the full methodology, disclosure of AI tool usage,
and verification procedures,
see~\cite{chang2026collatzdynamicshumanllm}.

\bibliographystyle{plain}
\bibliography{references}

\end{document}